\theoremstyle{plain}
\newtheorem{theorem}{Theorem}[section] % reset theorem numbering for each chapter
\newtheorem{lemma}[theorem]{Lemma}
\newtheorem{proposition} [theorem]{Proposition}
\theoremstyle{definition} 
\newtheorem{defn}[theorem]{Definition}% definition numbers are dependent on theorem numbers
\newtheorem{eg}[theorem]{Example}
\newtheorem{remark}[theorem]{Remark}
\newcommand{\PP}{\mathbb{P}}
\newcommand{\R}{\mathbb{R}}
\newcommand{\Z}{\mathbb{Z}}
\newcommand{\Ail}{\mathcal{A}}
\newcommand{\Bil}{\mathcal{B}}
\newcommand{\Fil}{\mathcal{F}}
\newcommand{\Gil}{\mathcal{G}}
\newcommand{\Nil}{\mathcal{N}}
\newcommand{\One}{\mathbbm{1}}
\title{The problem of infinite information flow}
\author{Zheng Bian\thanks{Clarkson
		Center for Complex Systems Science (C$^3$S$^2$), Potsdam, NY 13699 USA,
		(\texttt{zheng@bian-zheng.cn}).}
	\and Erik M. Bollt\thanks{Department of Electrical and Computer Engineering, Clarkson University, Potsdam, NY 13699 USA, and Clarkson
		Center for Complex Systems Science (C$^3$S$^2$), Potsdam, NY 13699 USA,
		(\texttt{ebollt@clarkson.edu}).}
}
\date{March 25, 2025}
\begin{document}
	\maketitle

% REQUIRED
\begin{abstract}
We study conditional mutual information (cMI) between a pair of variables $X,Y$ given a third one $Z$ and derived quantities including transfer entropy (TE) and causation entropy (CE) in the dynamically relevant context where $X=T(Y,Z)$ is determined by $Y,Z$ via a deterministic transformation $T$. Under mild continuity assumptions on their distributions, we prove a zero-infinity dichotomy for cMI for a wide class of $T$, which gives a yes-or-no answer to the question of information flow as quantified by TE or CE. Such an answer fails to distinguish between the relative amounts of information flow. To resolve this problem, we propose a discretization strategy and a conjectured formula to discern the \textit{relative ambiguities} of the system, which can serve as a reliable proxy for the relative amounts of information flow. We illustrate and validate this approach with numerical evidence.
%This is an example SIAM \LaTeX\ article. This can be used as a
%template for new articles.  Abstracts must be able to stand alone
%and so cannot contain citations to the paper's references,
%equations, etc.  An abstract must consist of a single paragraph and
%be concise. Because of online formatting, abstracts must appear as
%plain as possible. Any equations should be inline.
\end{abstract}

%% REQUIRED
%\begin{keywords}
%information flow, causal inference, information theory, entropy, Kullback–Leibler divergence, mutual information, conditional mutual information
%\end{keywords}
%
%% REQUIRED
%\begin{MSCcodes}
%94A17  %68Q25, 68R10, 68U05
%\end{MSCcodes}

	\tableofcontents

\section{Introduction}
Quantifying information flow is a critical task for understanding complex systems in various scientific disciplines, from neuroscience \cite{Vicente2011, Ursino2020, Shorten2021} to financial markets \cite{Dimpfl2013,Assaf2022}. Information measures such as mutual information (MI), conditional mutual information (cMI) \cite{Cover2005}, transfer entropy (TE) \cite{Schreiber2000}, and causation entropy (CE) \cite{Sun2015},   have become essential tools for this purpose.

Tracing back to the classic Weiner-Granger causality \cite{granger1969investigating,granger1988some,barnett2009granger,hendry2004nobel}, a central idea that underlies these information theoretic methods of quantifying information flow is the notion of \textit{disambiguation} in a predictive framework.
In contrast to the experimentalist approach, which infers causality from outcomes of perturbations and experiments, the predictive framework, which we consider below, is premised on alternative formulations of the forecasting question, with and without considering the influence of an external system. 
%Consider the following two scenarios.(1) A system $V$ alone allows for sufficient information to forecast its own future states; we declare that $V$ is a closed system. (2) The forecast for the future $V$ states may be improved or disambiguated by additional observations from another system $U$. In other words, knowledge about the state of $U$ resolves uncertainty about the state of $V$ \cite{Bossomaier2016}. In this case, we declare that $V$ is not closed as it is receiving influence or information from system $U$. 

% Intuitively, information flows from an entity $U$ to another $V$ when knowledge about the state of $U$ resolves uncertainty about the state of $V$ \cite{Bossomaier2016}. 
Formulated by Schreiber \cite{Schreiber2000} in 2000, TE is a quantitative attempt in this predictive framework. We think of $V=\{V_t\}$ and $U=\{U_t\}$ as stochastic processes indexed by discrete time $t=0,1,\cdots$; for a concrete example, imagine that $V,U$ record EEG times series data from different parts of the brain.  We expect that the present state $V_t$ informs about the future state $V_{t+1}$ and are interested in determining whether the present state $U_t$ also informs about $V_{t+1}$. 
If $V_{t+1}$ is conditionally independent of $U_t$ given $V_t$, then the knowledge about the state of $U_t$ does not resolve any uncertainty about the state of $V_{t+1}$, assuming one already has access to the state of $V_t$. In this case, we would like to conclude no information flow from $U$ to $V$ at time $t$ and zero TE accordingly. Otherwise, any deviation from this conditional independence indicates the presence of information flow, to be captured and quantified by some positive value of TE measured in bits per time unit.

By a slight generalization of Schreiber's original formulation and in agreement with the usual definition for discrete variables, we define TE 
\begin{equation}\label{eq:TE_defn}
	T_{U\to V,t} := I(V_{t+1}; U_t| V_t)
\end{equation}
to be the conditional mutual information  of $V_{t+1},U_{t}$ given $V_t$.
%The conditioning on $V_t$ accounts for the knowledge 
%we have $T_{U\to V,t}$
%From a probabilistic point of view,
%Given a pair of random variables, mutual information (MI) quantifies their deviation from independence. 
%given a triple of random variables, 
For simplicity, this is the case of lag length 1; longer lags are allowed in general. Causation entropy, proposed by Sun, Taylor and Bollt \cite{Sun2015}, generalizes TE to infer network connectivity \cite{sun2014causation,sun2014identifying,almomani2020entropic,lord2016inference}, by also building in conditioning on ternary influences as a way to resolve the differences between direct and indirect interactions. 
% by conditioning on more variables and has been
The precise definition of cMI will be given in Section \ref{sec:bg_cMI}. Roughly speaking, it quantifies the deviation from conditional independence of a pair of random variables conditioned on a third variable. 
%Transfer entropy (TE) is
%\[
%T_{U\to V,t} = I(V_{t+1}; U_t| V_t),
%\]
%which quantifies the information flow from $U=\{U_t\}$ to $V=\{V_t\}$ at time $t$. Indeed, 
%\textbf{lacuna in cMI theory literature.}

%TE has been used to detect information flow in brain regions and financial markets QQQ cite. Causation entropy (CE), a generalization of TE by conditioning on more variables, has been applied by Bollt et al. \cite{Sun2015} to infer network connectivity. 

%Some also claim that a larger value of TE implies a stronger information [cite] QQQ.

\subsection{Zero-infinity dichotomy}
%\textbf{Infinite information flow.} 
Consider a typical situation from dynamical systems, where the random variable $V_{t+1}$ is determined by 
%two others 
$U_t,V_t$ via some deterministic map $T$, that is, 
\begin{equation} \label{eq:T_UV}
	V_{t+1}= T(U_t,V_t).
\end{equation}
If $V_{t+1}$ does not depend on $U_t$, that is, $V_{t+1}=T_0(V_t)$, then we trivially have zero information flow $T_{U\to V,t}=0$. In terms of probability distributions, this case corresponds to the regular conditional probability $\PP(V_{t+1}\in\cdot|V_t=v_t)=\delta_{T_0(v_t)}$ being a dirac delta.

Otherwise, one expects $T_{U\to V,t}>0$ to quantify the amount of information flowing from $U$ to $V$ at time $t$.
For example, if the map $T$ is highly ``ambiguous'', then the knowledge about the states of $U_t,V_t$ does not resolve much uncertainty about the state of $V_{t+1}$. 

\begin{eg}\label{eg:T_1_T_2}
	Consider two maps $T_1(u,v)=100(u+v)\mod1$ and $T_2(u,v)=u+v\mod1$. The knowledge about the states of $U_t,V_t$ up to $10^{-2}$ precision is completely lost via $T_1$ and trivially informs that $V_{t+1}=T_1(U_t,V_t)$ lies in $[0,1]$, whereas this knowledge under $T_2$ informs about the state of $V_{t+1}=T_2(U_t,V_t)$ up to precision $2\times 10^{-2}$.
	Therefore, we may expect $T_{U\to V,t}$ to be smaller in the more ambiguous case of $V_{t+1}=T_1(U_t,V_t)$ than in the case of $V_{t+1}=T_2(U_t,V_t)$. 
\end{eg}
However, under some mild continuity assumptions on the distribution of $V_{t+1}$, we see that in both cases, $T_{U\to V,t}=\infty$. This holds more generally for any measurable map $T$.
Throughout this paper, we assume that the random variables take values in standard measurable spaces, unless otherwise stated. This implies the existence and essential uniqueness of regular conditional probabilities and disintegrations; for details see Appendix \ref{sec:RCP_standard_space}.

%Then, under mild continuity conditions on the distribution of $V_{t+1}$, we show that the conditional mutual information $I(V_{t+1};U_t|V_t)$ of $V_{t+1},U_{t}$ given $V_t$ is infinite. Refer Theorem QQQ. In other words, an infinite amount of information flows from $U$ to $V$ at time $t$.

\textbf{Theorem A (infinite information flow):} \textit{
	Assume that for a positive measure set of outcomes $v_t$ of $V_t$, the regular conditional probability distribution $\PP(V_{t+1}\in\cdot|V_t=v_t)$ of $V_{t+1}$ in Eq. (\ref{eq:T_UV}) 
	%given $V_t=v_t$ 
	charges an atomless continuum. Then, the transfer entropy $T_{U\to V,t}$ from $U$ to $V$ at time $t$ is infinite.
	% Under mild continuity conditions on the distribution of $V_{t+1}$ in Eq. (\ref{eq:T_UV}),
	%	If $V_{t+1}$ has a reasonably continuous distribution, then 
	% the transfer entropy $T_{U\to V,t}$ from $V$ to $U$ at time $t$ is either zero or infinite.
}

\begin{remark}
	The positive measure set is with respect to the distribution of $V_t$.
	%Regular conditional probabilities are reviewed in our context in Appendix \ref{sec:RCP_standard_space}. 
	%The almost sure condition on $v_t$ means up to a null set with respect to the marginal distribution of $V_t$.
	We say that a probability measure $\mu$ charges an atomless continuum if there is a measurable set $B$ such that $\mu(B)>0$ and $\mu(\{b\})=0$ for each point $b\in B$. The assumption of  Theorem A says that $V_t$ alone does not fully determine $V_{t+1}$ but rather leaves a rich continuum of possible values for $V_{t+1}$. This is the case, for example, when $V_{t+1}= T_1(U_t, V_t)$ or $V_{t+1}=T_2(U_t,V_t)$ as in Example \ref{eg:T_1_T_2} with $V_t$ and $U_t$ independent and following the uniform distribution on $[0,1]$.
\end{remark}

%is determined by $X=T(Y,Z)$, then we have infinite information flow from
%If $Z$ alone determines $X$, then $I(X;Y|Z)=0$. If $Y,Z$ together determine $X$, then under a mild continuity condition on the distribution of $X$, we have $I(X;Y|Z)=\infty$.

Theorem \ref{thm:cMI_0_infty} gives an equivalent but slightly different formulation of Theorem A and is proven in Section \ref{sec:infinite_cMI}.  The zero-infinity dichotomy of $T_{U\to V,t}$ gives a yes-or-no answer to the question of information flow. %In this sense, cMI, and, by extension TE and CE, fail to distinguish between the different amounts of information flow. 

	A key step in the proof of Theorem A is to disintegrate the conditional mutual information into mutual information between conditioned variables. We believe that this result is interesting in its own right and state it below.

	\textbf{Theorem B (disintegration of conditional mutual information):} \textit{The conditional mutual information $I(X;Y|Z)$ of three random variables $X,Y,Z$ is the average of the mutual information $I(X_z;Y_z)$ between conditioned versions $X_z,Y_z$ of $X,Y$ defined in Eq. (\ref{eq:defn_X_z_Y_z}), that is,
		\[
		I(X;Y|Z)=\int I(X_z;Y_z) \mathrm{d} P_Z(z).
		\]}
	\begin{figure}[h]
		\centering
		\includegraphics[width=150mm]{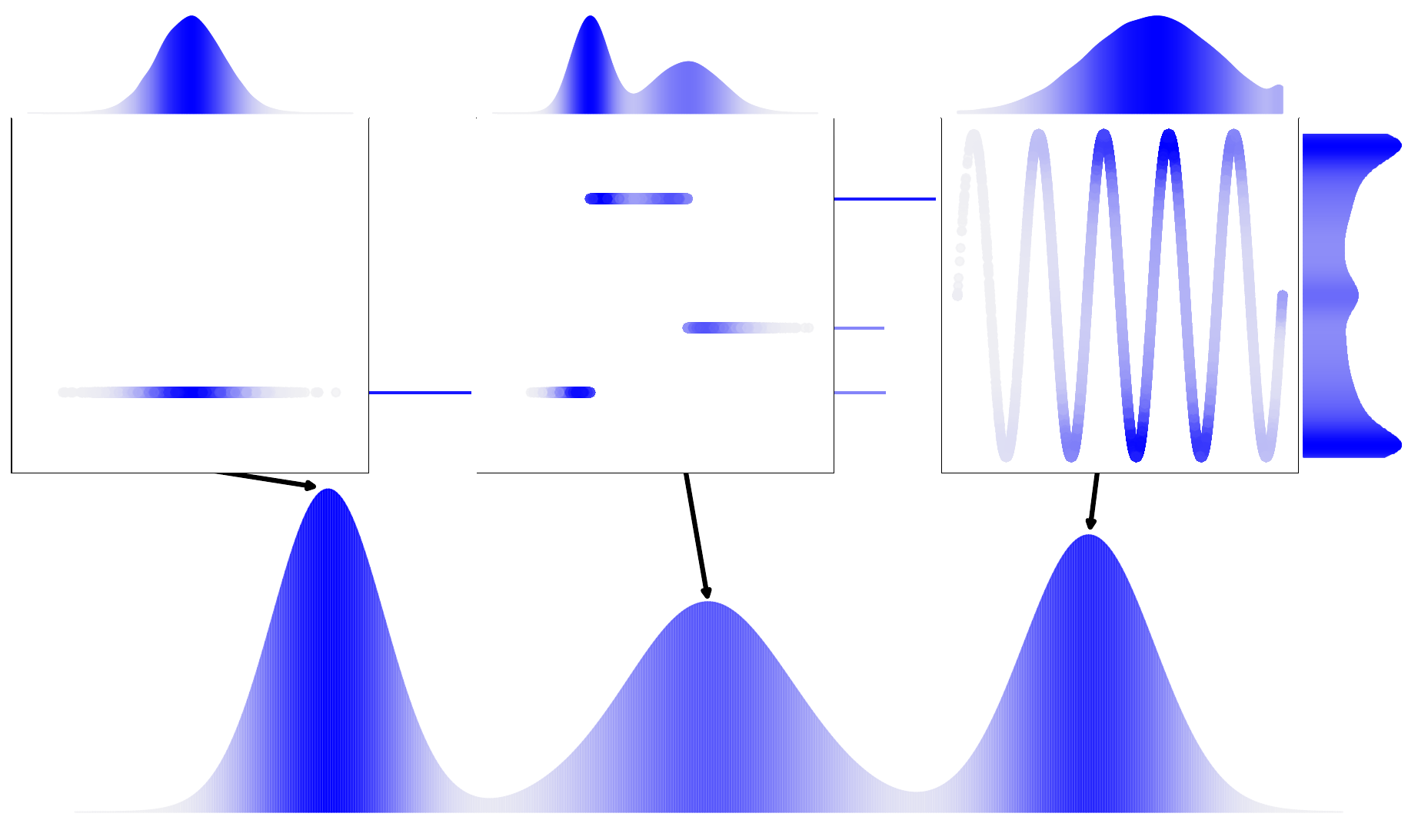}
		\caption{Disintegrated distributions. The main histogram at the bottom illustrates the distribution $P_Z$ of variable $Z$, which, together with $Y$, determines $X=T(Y,Z)$ via a measurable map $T$. The joint distribution $P_{XYZ}$ disintegrates into $(P_{XYZ})_z$ for each realization of $Z=z$, which can be interpreted as the joint distribution $P_{X_z Y_z}$ of the conditioned versions $X_z,Y_z$ of $X,Y$. 
			%			Continuous, constant and atomic distributions of random variable $X=T(Y)$, determined by variable $Y$ via a measurable function $T$. 
			The left, center and right subplots above the main histogram illustrate three typical disintegrated distributions $(P_{XYZ})_z = P_{X_z Y_z}$, where $X_z$ follows a constant, atomic and continuous distribution, respectively. In each subplot, the scatter plot shows the joint distribution $P_{X_zY_z}$, the top histogram shows the marginal distribution $P_{Y_z}$, and the right histogram shows the marginal distribution $P_{X_z}$.  The intensity of the blue gradient indicates regions of high probability density.
		}
		\label{fig:disintegrated}
	\end{figure}
	\begin{remark}
		%The standard results concerning standard alphabets and disintegration are reviewed in Appendix \ref{sec:RCP_standard_space}. 
The conditioned variables $X_z,Y_z$ describe the probabilistic landscape once the uncertainty about $Z$ is removed, by assuming that the outcome of $Z$ is $z$. This allows the intermediate measurement of $I(X_z;Y_z)$ on this particular outcome. By averaging across all outcomes of $Z$, the full conditional mutual information $I(X;Y|Z)$ is recovered. 
		We illustrate pictorially three typical scenarios 
        % of conditioned variables $X_z$ and $Y_z$ 
        in Figure \ref{fig:disintegrated}; the subplots show the joint $P_{X_zY_z}$ and marginal distributions $P_{X_z}, P_{Y_z}$ of pairs of random variables $X_z,Y_z$ above the main histogram illustrating the distribution of $Z$.
		Proposition \ref{prop:average_disintegrated_MI} gives an equivalent but slightly different formulation of Theorem B and is proven in Section \ref{sec:conditional_MI_transfer_causation_entropy}. The main technical step involves the proper construction of $X_z,Y_z$ in Eq. (\ref{eq:defn_X_z_Y_z}) and the equivalence of disintegration and regular conditional probability in our context.
	\end{remark}

Theorem B reduces the analysis of TE or cMI in Theorem A to that of MI between conditioned variables. The exhaustive analysis of MI in the deterministic context thus completes the proof of Theorem A.

In practice, one computes TE from a finite amount of data and obtains finite positive values of $T_{U\to V,t}$.
As noted in \cite{Bossomaier2016}, much of the literature that applies TE to detect information flow focuses on establishing that $T_{U\to V,t}$ is statistically significantly different from zero, and treats the finite positive values of $T_{U\to V,t}$ as mere artifacts of finite sampling.
% via a very chaotic map and via a not so chaotic map.

%The fact of either infinity or zero as an outcome is quite uncomfortable situation to the desired  utility of contrasting information flow values with more nuanced fidelity that would be positive real values. 
As discussed in Example \ref{eg:T_1_T_2}, a more ambiguous map such as $T_1$ allows through less information flow, which should be reflected by a smaller value of $T_{U\to V,t}$. Of course, this intuitive assumption is valid for discrete variables. However, it lacks theoretical justification in the case of continuous variables as pointed out by  Theorem A, which is typical for applications to dynamical systems. We refer to this discrepancy between the practically obtained finite TE values and the theoretic zero-infinity dichotomy as the \textit{problem of infinite information flow}.

\subsection{Resolution by discretization}
In light of Theorem B, it suffices to analyze the pairwise  $I(X;Y)$ for $X=T(Y)$, seeing that $I(X;Y|Z)$ can be obtained by averaging across $I(X_z,Y_z)$ for pairs of conditioned variables $X_z,Y_z$.
A resolution of the problem of infinite information flow needs to achieve two things:
\begin{enumerate}
	\item[(R1)] modify the model so as to obtain finite values for $I(X;Y)$,
	\item[(R2)] by comparing the relative values, distinguish between the relative amounts of information flow.
\end{enumerate}

By adding white noise to the map $T$ as employed in \cite{Surasinghe2020}, one can easily achieve (R1) as a blurring effect. However, we will show in Appendix \ref{sec:additive_noise} that this strategy still falls short of (R2). In fact,
	we prove for Bernoulli maps with uniformly distributed additive noise of amplitude $\epsilon$, uniformly distributed $Y$ and hence $X$, the resulting finite value of $I(X;Y)$ is $\ln\frac{1}{\epsilon}$, which is a function of the noise amplitude alone, independent of the expanding rate of the Bernoulli map. In this sense, the addition of white noise does not achieve (R2) because the resulting finite values of $I(X;Y)$ cannot distinguish between the relative dynamical ambiguities of the Bernoulli systems.

	We propose discretization as a strategy to achieve both (R1) and (R2) and illustrate in the one-dimensional case. 

	\textbf{Conjecture C (relative ambiguity of $(T,Y)$):} \textit{
		Suppose that $X,Y$ are $\R$-valued random variables with continuous probability density functions $f_X,f_Y$, respectively, and that there is a piecewise $C^1$ map $T$ for which $|T'|\geq 1$ and $X=T(Y)$. Consider the discretization by uniform mesh of size $\Delta>0$, that is,
		\[
		\Pi^{\Delta}:\R\to \Z\Delta,~~~~(\Pi^{\Delta})^{-1}\{i\Delta\} = [i\Delta,(i+1)\Delta),~~~~i\in\Z.
		\]
		%Discretize the joint space $\R^2$ by uniform mesh of size $\Delta>0$ into squares
		%\[
		%B_{i,j} = [i\Delta,(i+1)\Delta)\times [j\Delta,(j+1)\Delta),~~~~-\infty<i,j<\infty
		%\]
		%so that the variables $X,Y$ become $X^{\Delta},Y^{\Delta}$ with joint distribution given by
		%\[
		%\PP(X^{\Delta}=i\Delta,Y^{\Delta}=j\Delta) := \PP((X,Y)\in B_{i,j}).
		%\]
		Then, in the limit as $\Delta\to0^+$, the discretized variables $X^{\Delta}:=\Pi^{\Delta}X,Y^{\Delta}:=\Pi^{\Delta}Y$ satisfy
		\[
		I(X^{\Delta};Y^{\Delta}) + \ln\Delta \to  H(X) - \int \ln|T' | f_Y \mathrm{d}y =: -A_T(Y),
		\]
		where $H(X):= -\int f_X\ln f_X\mathrm{d}x$ is the differential entropy of $X$ and the quantity $A_T(Y)$ shall be called the \textit{relative ambiguity} of system $(T,Y)$.
	}

\begin{remark}
	In the special case of $T=\mathrm{id}$, we have $I(X^{\Delta};X^{\Delta})+\ln\Delta\to H(X)$ and recover the relation between Shannon entropy and differential entropy, see e.g. \cite[Section 9.3]{Cover2005}.
	More generally, it is clear that in the refinement limit of the discretization, i.e., as $\Delta\to 0^+$, the MI between the discretized variables $I(X^{\Delta},Y^{\Delta})$ tends to the infinite theoretic  value $I(X;Y)$. This is not our primary concern, however. What is more interesting is the behavior for finite $\Delta^{-1}$. Namely, for any finite $\Delta^{-1}$, the intuition that a more ambiguous system $(T,Y)$ with large relative ambiguity $A_T(Y)$ allows through less information is reflected by a smaller value of $I(X^{\Delta},Y^{\Delta})$.
	In this sense, discretization achieves both (R1) and (R2), resolving the problem of infinite information flow. 
	
Note that	the relative ambiguity $A_T(Y)$
%	 right hand side of the conjectured formula measures what we shall call the ``relative ambiguity'' of the system $(T,Y)$, which 
	involves an entropy and an exponent, which naturally suggests a link to the Pesin entropy formula \cite{Pesin1977}. However, we defer further discussions on this link, as well as the proof and generalization of Conjecture C, to a separate ongoing work.
\end{remark}

Below, we validate Conjecture C with numerical evidence in some concrete dynamical examples. A sketch of the derivation of the conjectured formula for $A_T(Y)$ is included in the Appendix \ref{sec:app_discretized_MI_formula}.

\begin{eg}[Bernoulli interval maps] \label{eg:Bernoulli}
	Let the random variable $X=E_d(Y)$ be determined by $Y$ via the piecewise linear expanding map $E_d:[0,1]\to[0,1]$, $d\in\Z$, $d\geq 2$, on the unit interval given by 
	\[
	E_d(x)=d\cdot x\mod 1.
	\]
	Assume $Y$ follows a continuous distribution (we consider uniform and Gaussian $\Nil_{[0,1]}(0.3,0.02)$ centered at 0.3 with variance $0.02$ truncated between 0 and 1) on the interval.
	% If $P_Y$ is uniform, then so is $P_X$. 
	By Theorem A, or more directly, Theorem \ref{thm:MI_finite_infinite}, $I(X;Y)=\infty$.
	
		From Conjecture C, we have zero differential entropy of the uniformly distributed variable $X$ and a constant expansion rate $|T'|=d$, which yields	$A_T(Y)=\ln d$.
		
	A direct calculation, see Section \ref{sec:Bernoulli_maps}, shows that if $Y$ is uniformly distributed in $[0,1]$, then so is $X$ and 
		\begin{align*}
				I(X^{\Delta};Y^{\Delta}) = \ln \Delta^{-1} -\ln d = -\ln\Delta + A_T(Y),
			\end{align*}
in agreement with Conjecture C.

	% By discretizing $X,Y$ into $X^{\Delta},Y^{\Delta}$ via the uniform partition
	% \[
	% \Qil=\Qil^{(L)}:=\{Q_i:i=1,\cdots,L\},~~~~Q_i=Q_i^{(L)}:=\left[\frac{i-1}{L}, \frac{i}{L} \right),~~~~L\in \Z_{>0},
	% \]
	% we obtain that the discretized $X^{\Delta},Y^{\Delta}$ have MI given by
	% The detailed calculations are carried out in Section \ref{sec:Bernoulli_maps}.

	In Figure \ref{fig:bernoulli}, we set $\Delta^{-1}=300$. The left and center panels show the scatter plots of the joint distribution $P_{X^{\Delta}Y^{\Delta}}$ of the discretized variables $X^{\Delta},Y^{\Delta}$, together with the marginal distribution $P_{Y^{\Delta}}$ on the top and $P_{X^{\Delta}}$ on the right of the scatter plots. We take $Y$ to follow the uniform distribution in the left panel in blue and the Gaussian $\Nil_{[0,1]}(0.3,0.02)$ in the center panel in red. The intensity of the colors indicates the high probability density. The right panel shows the mutual information $I(X^{\Delta},Y^{\Delta})$ decreases as the expansion rate $d$ of the Bernoulli map $E_d$ increases. The blue and red dots correspond to the cases of $Y$ following the $E_d$-invariant uniform distribution and the Gaussian $\Nil_{[0,1]}(0.3,0.02)$, respectively. For comparison, we superimpose the Conjecture prediction $\ln\Delta^{-1} +H(X) - \ln d$ in dashed lines.

	Observe that the dots from empirical calculations fit well with the Conjecture C predictions in dashed lines in both the uniform and Gaussian cases. In comparison to the uniform distribution, the tight Gaussian distribution $\Nil_{[0,1]}(0.3,0.02)$ of $Y$ results in a smaller (in fact, negative) differential entropy term $H(X)$ and hence a bigger relative ambiguity $A_T(Y)$ of the system $(T,Y)$ and a smaller discretized mutual information. As the Bernoulli expanding rate $d$ increases, the system $(T,Y)$ becomes more ambiguous in both the uniform and Gaussian cases, and hence $I(X^{\Delta},Y^{\Delta})$ decreases. For very large $d$, the expansion is so strong that even the tight Gaussian distribution of $Y$ smoothens to an almost uniform distribution of $X$ via $E_d$ and we see convergence of the two curves.
	 This example validates both Conjecture C and the discretization strategy's ability to achieve (R1--2).
	
	%blue curve for the uniform distribution $P_Y$ fits well with the theoretic prediction in green. 
	%This shows that the numerical implementation agrees with our theoretic discretization. Moreover, even though $\Nil_{[0,1]}(0.3,0.02)$ is not $E_d$-invariant, the resulting mutual information still decreases as the map $E_d$ becomes more chaotic (as $d$ increases).

	\begin{figure}[h]
		\centering
		\includegraphics[width=150mm]{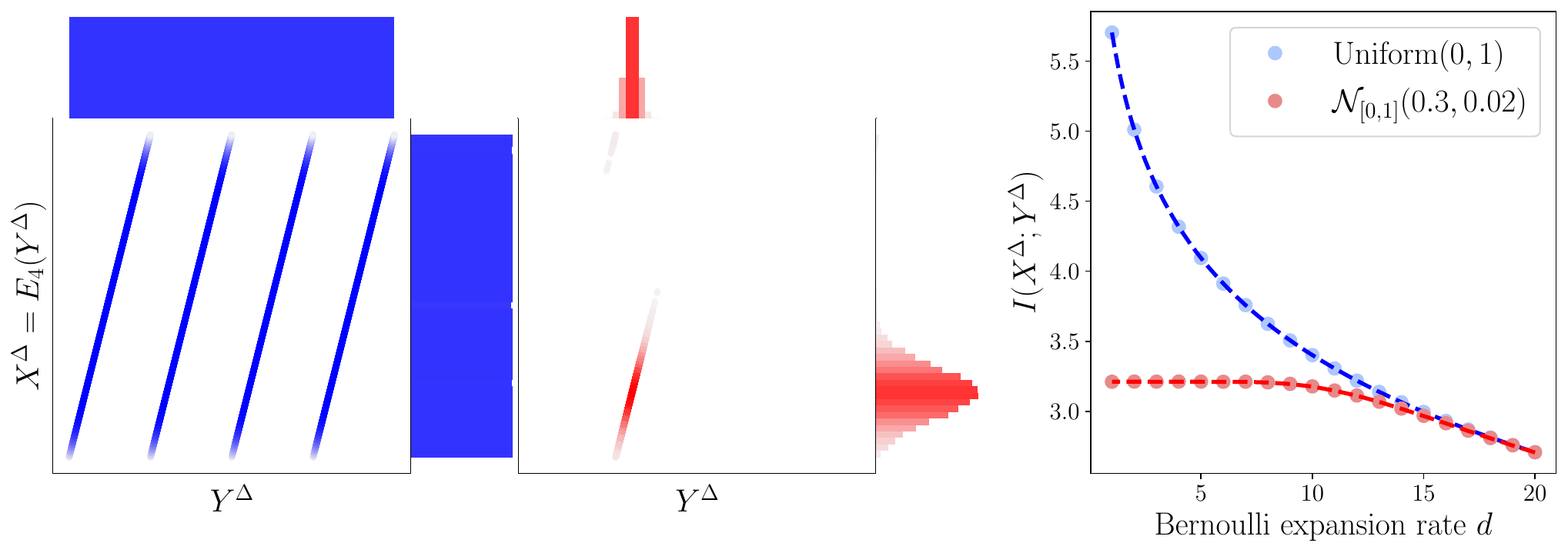}
		\caption{Discretization via uniform $\Delta^{-1}=300$ partition of continuous random variable $X=E_d(Y)$ determined by variable $Y$ via the Bernoulli map $E_d:x\mapsto d\cdot x\mod 1$. In the left and middle panels, the scatter plots show the joint distribution $P_{X^{\Delta}Y^{\Delta}}$ of the discretized variables $X^{\Delta},Y^{\Delta}$, together with the marginal distributions $P_{Y^{\Delta}}$ at the top and $P_{X^{\Delta}}$ on the right. The blue and red plots correspond to $Y$ following the uniform and Gaussian $\Nil_{[0,1]}(0.3,0.02)$ distributions, respectively. Here, $\Nil_{[0,1]}(0.3,0.02)$ means the Gaussian distribution centered at $0.3$ with variance $0.02$ and truncated between 0 and 1. The right panel plots for each Bernoulli expansion rate $d$, the corresponding $I(X^{\Delta};Y^{\Delta})$ of the discretized variables. The blue and red dots correspond to the empirical calculations of uniform and Gaussian $\Nil_{[0,1]}(0.3,0.02)$ distributions, respectively. The dashed lines show the theoretic predictions from Conjecture C.
		}
		\label{fig:bernoulli}
	\end{figure}
\end{eg}

%\begin{remark}
%Note that $\ln d$ happens to be the Lyapunov exponent (LE) of $(E_d,\mathrm{Uniform})$. One might wonder if MI is related to LE in general by a relation similar to Eq. (\ref{eq:lnL-lnd}). Unfortunately, our Markov map example in Appendix \ref{sec:Markov_maps} gives a negative answer. Even in this simple two-parameter family, the same LE corresponds to many different MI. 
%\end{remark}

The next example illustrates the discretization strategy in a nonlinear case and beyond the scope of Conjecture C (because the map has contracting regions).

\begin{eg}[Sine box functions] \label{eg:sine_box}
	Let the random variable $X=S_n(Y)$ be determined by $Y$ via the sine box function $S_n:[0,1]\to[0,1]$ given by
	\[
	S_n(x):=\frac{1+\sin2\pi nx}{2},~~~~n=1,2,\cdots.
	\]
	We consider two continuous distributions for $Y$, namely, the uniform distribution and the absolutely continuous $S_n$-invariant probability (acip) distribution. The acip is approximated by a long trajectory $\{y_t\}, y_{t+1}=S_n(y_t), t=\tau_0,\tau_0+1,\cdots,\tau_0+\tau-1$ of length $\tau=10^6$ with the first $\tau_0=1000$ iterates discarded as transients.  In both cases, we have $I(X;Y)=\infty$ by Theorem A, or more directly, Theorem \ref{thm:MI_finite_infinite}.
	
	In Figure \ref{fig:sine_box}, we discretize $X,Y$ the same way as in Example \ref{eg:Bernoulli}. For $S_4$, we show the scatter plots of $P_{X^{\Delta}Y^{\Delta}}$ and histogram of $P_{Y^{\Delta}}$ at the top and $P_{X^{\Delta}}$ on the right of the left and center panels. The uniform $P_Y$ shown in blue on the left is not invariant for $S_n$, but the red acip in the middle is $S_n$-invariant. The right panel shows that with $Y$ following either uniform or acip distribution, the mutual information $I(X^{\Delta};Y^{\Delta})$ between the discretized variables $X^{\Delta},Y^{\Delta}$ decreases as the function $S_n$ becomes more ambiguous (as $n$ increases). The calculation and simulation details are presented in Section \ref{sec:sine_box}.
	
	\begin{figure}[h]
		\centering
		\includegraphics[width=150mm]{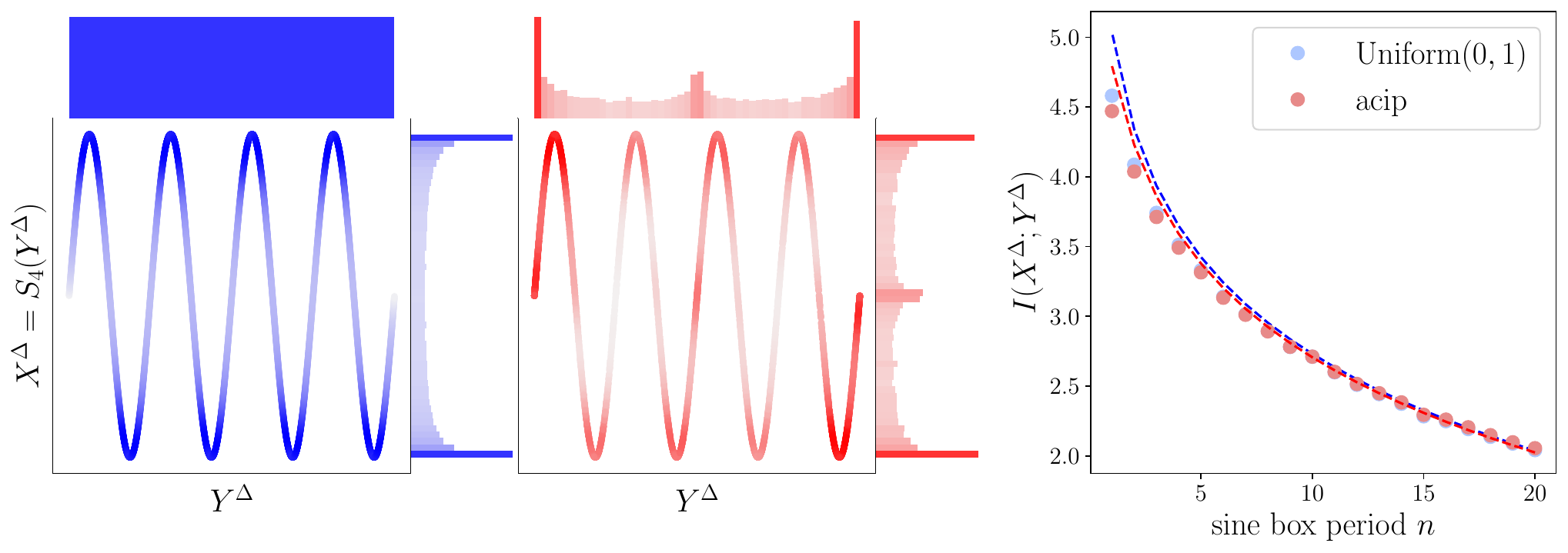}
		\caption{Discretization via uniform $\Delta^{-1}=300$ partition of continuous random variable $X=S_n(Y)$ determined by variable $Y$ via the sine box function $S_n:x\mapsto \frac{1+\sin 2\pi n x}{2}$. In the left and middle panels, the scatter plots show the joint distribution $P_{X^{\Delta}Y^{\Delta}}$ of the discretized variables $X^{\Delta},Y^{\Delta}$, together with the marginal distributions $P_{Y^{\Delta}}$ at the top and $P_{X^{\Delta}}$ on the right.  The right panel plots for each $n$, the corresponding MI $I(X^{\Delta};Y^{\Delta})$ of the discretized variables, with the empirical values shown in dots and Conjectured values in dashed lines. The blue and red colors correspond to $Y$ following the uniform and acip distributions, respectively.
		}
		\label{fig:sine_box}
	\end{figure}
\end{eg}

We remark that the sine box example falls outside the scope of Conjecture C because $S_n$ has contracting regions near $\frac{k}{2n} + \frac{1}{4n}$ for each $n=1,2,\cdots$ and $k=0,\cdots,2n-1$, where our Conjectured formula fails. It turns out that these contracting regions are assigned a higher weight  for smaller values of $n$ and uniform and acip densities of $f_Y$, leading to a bigger discrepancy between the empirical and Conjectured $I(X^{\Delta},Y^{\Delta})$ values for small $n$. In spite of this, it is remarkable that our formula still captures the trend that as $n$ increases, the relative ambiguity of $S_n$ increases and $I(X^{\Delta},Y^{\Delta})$ decreases. This example illustrates the validity of the discretization strategy.

To obtain meaningful finite values of TE or cMI in Eq. (\ref{eq:TE_defn}) that can distinguish the relative amounts of information flow, we discretize each conditioned version $Y_z$ and $X_z= T_z(Y_z)=T(Y_z,z)$ to obtain meaningful finite values of MI $I(X_z;Y_z)$ as in Examples \ref{eg:Bernoulli}, \ref{eg:sine_box}, and then average/integrate across all versions of $z$ against the marginal distribution $P_Z$ (or its discretization) in the sense of Theorem B. The numerical computations of TE in practice, in our view, essentially implement a similar discretization scheme.

\textbf{Organization of the paper.} 
In Section \ref{sec:bg_cMI} we review the definition and properties of MI and cMI and end with Proposition \ref{prop:average_disintegrated_MI} to decompose cMI into disintegrated MI of conditioned versions of the original variables. In Section \ref{sec:dyn_determinism}, we analyze the dichotomy properties of MI and cMI leading to the proof of the Theorem of infinite information flow. In Section \ref{sec:examples}, we present  detailed calculations and simulations for the illustrative Bernoulli and sine box examples \ref{eg:Bernoulli}, \ref{eg:sine_box}. In the Appendix, we discuss the key technical results on standard spaces, regular conditional probability, disintegration, and the effect of additive white noise.

\textbf{Acknowledgments.} We thank Tiago Pereira and Edmilson Roque dos Santos for helpful discussions and
comments.   Z.B. and E.M.B. are supported by the NSF-NIH-CRCNS.  E.M.B. is also supported by DARPA RSDN,
the ARO, and the ONR. 

\section{Background on cMI} \label{sec:bg_cMI}
We review notions and properties of 
Kullback-Leibler divergence in Section \ref{sec:KL}, 
entropy and mutual information in Section \ref{sec:entropy_MI}, 
%conditional, relative, and conditional relative information in Section \ref{sec:conditional_relative_conditionalrelative_entropy}, 
and the conditional mutual information
%, tranfer and causation entropy 
in Section \ref{sec:conditional_MI_transfer_causation_entropy}. Some technical definitions and constructions, including the standard measurable space and regular conditional probability, are essential for the general definition of the conditional mutual information and therefore are also briefly reviewed in the Appendix.
% Section \ref{sec:conditional_MI_transfer_causation_entropy}.
%In each section we start with the finite alphabet case and generalize to arbitrary or at least standard alphabets. Some background knowledge on standard spaces and regular condition probabilities will be reviewed in Section \ref{sec:RCP_standard_space}. 
More details can be found in \cite{Gray2009,Gray2011}.

%The order of presentation, in particular, with Kullback-Leibler divergence before entropy, is chosen to reflect the logical order in the general definition of entropy as a special case of mutual information, which is defined as a Kullback-Leibler divergence.

Let $(\Omega,\Fil,\PP)$ be a probability space and $f:\Omega\to A$ a measurable function (also called random variable)
%(also called a random variable or \textit{measurement})
taking values in the measurable space $(A,\Bil_A)$ called the \textit{alphabet}. Denote
% \cite[pp 3]{Gray2011} 
the \textit{distribution} of $f$  on $(A,\Bil_A)$ by
\[
P_f:= f_*\PP.
\]
%A real-valued random variable $f:\Omega\to\R$ is called a \textit{measurement}. Additionally, if its range (alphabet) $f(\Omega)\subseteq \R$ is a finite set, then $f$ is called a \textit{simple function}, \textit{discrete random variable}, \textit{discrete measurement}, or \textit{digital measurement}. In this case, we denote the \textit{probability mass function (pmf)} of $f$ by
%\[
%p_f(a):= P_f(\{a\}) =\PP(f=a).
%\]
When $A$ is a finite/countable set, we say that the alphabet is finite/discrete. 
For several random variables $f_1,\cdots,f_n$, we denote their joint distribution by $P_{f_1\cdots f_n}=(f_1,\cdots,f_n)_*\PP$ and the product measure of their marginal distributions by $P_{f_1}\otimes\cdots\otimes P_{f_n}=((f_1)_*\PP)\otimes\cdots\otimes ((P_{f_n})_*\PP)$.

\subsection{Kullback-Leibler divergence} \label{sec:KL}
%\subsubsection*{Finite alphabet KL}
First consider the special case where $A$ is a finite set and $\Bil_A=2^A$.
Given two probability measures $P,M$ on $(A,\Bil_A)$,
%  \cite[pp 22]{Gray2011}.
% Consider a measurable space $(\Omega,\Fil)$, with $\Omega$ a finite set, and $P,M$ two probability measures thereon. 
the \textit{Kullback-Leibler divergence of $P$ with respect to $M$} is defined to be
\[
\mathrm{KL}(P\|M):= \sum_{a\in A} P(a) \ln\frac{P(a)}{M(a)}.
\] 
Note that this makes sense only when $M(a)=0$ implies $P(a)=0$, i.e., $P\ll M$. In this case we define $0\ln \frac{0}{0}:=0$; otherwise, $\mathrm{KL}(P\|M)$ is defined to be $\infty$.

%\subsubsection{KL divergence}
%We follow \cite[Chapter 5.5]{Gray2011} to develop the information theory based on measure, probability, erogodic theories.

%\subsubsection*{Arbitrary alphabet KL} 
Now consider the general case: two probability measures $P,M$ on an arbitrary measurable space $(\Omega,\Fil)$. The \textit{Kullback-Leibler divergence} $\mathrm{KL}(P\| M)$ of $P$ with respect to $M$  is defined as
%, cf. \cite[Chapter 5.2]{Gray2011}
\[
\mathrm{KL}(P\| M) :=  \sup_f \mathrm{KL}(P_f\| M_f),
\]
where the 
%first supremum is taken over all finite measurable partitions $\Qil$ of $\Omega$ and the second 
supremum is taken over all 
%measurements 
random variables $f:\Omega\to A$ with a finite alphabet $A$. In fact, there is a sequence of random variables $f_n$ with finite alphabets, for example, obtained via increasingly fine partitions of $\Omega$, such that $\mathrm{KL}(P_{f_n}\| M_{f_n})$ tends to $\mathrm{KL}(P\|M)$ as $n\to\infty$; see \cite[Corollary 5.2.3]{Gray2011}.

\begin{remark}\label{rem:KL}
	KL is an asymmetric quantity that underlies the definitions of Shannon, transfer, causation entropy and (conditional) mutual information. 
\end{remark}

A key property is the so-called divergence inequality:

\begin{lemma}[Divergence inequality, \cite{Gray2011} Lemma 5.2.1] \label{lemma:KL_div_ineq}
	For any probability measures $P,M$ on a common alphabet, we have $\mathrm{KL}(P\|M)\geq 0$ and the equality holds precisely when $P=M$.
\end{lemma}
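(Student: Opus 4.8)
The plan is to reduce to the finite-alphabet case, where the inequality follows from a one-line convexity estimate, and then bootstrap to the general case via the defining supremum.

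First I would treat finite $A$. If $P \not\ll M$ the divergence is $+\infty$ by definition, so I may assume $P \ll M$. I would invoke the elementary inequality $\ln t \le t-1$ for $t>0$ (with equality iff $t=1$), applied with $t = M(a)/P(a)$ over the support of $P$, to get
\[
-\mathrm{KL}(P\|M) = \sum_{a:\,P(a)>0} P(a)\ln\frac{M(a)}{P(a)} \le \sum_{a:\,P(a)>0}\bigl(M(a)-P(a)\bigr) \le 0,
\]
where the last step uses $\sum_{a:\,P(a)>0}M(a)\le 1 = \sum_{a:\,P(a)>0}P(a)$. This gives $\mathrm{KL}(P\|M)\ge 0$. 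For equality, both inequalities above must be equalities: the pointwise one forces $M(a)=P(a)$ for every $a$ with $P(a)>0$, and the second then forces $\sum_{a:\,P(a)>0}M(a)=1$, i.e. $M$ carries no mass off $\mathrm{supp}\,P$; together these give $M=P$. The converse is immediate. (One could equivalently phrase the estimate as Jensen's inequality for the convex function $-\ln$.)

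Next, for a general alphabet, non-negativity is immediate from the definition: each term $\mathrm{KL}(P_f\|M_f)$ in the supremum is $\ge 0$ by the finite case (and a constant $f$ already attains $0$), so $\mathrm{KL}(P\|M)=\sup_f\mathrm{KL}(P_f\|M_f)\ge 0$. For the equality statement, $P=M$ trivially gives $P_f=M_f$ for every $f$, hence $\mathrm{KL}(P\|M)=0$. The converse is the only part with any content: if $\mathrm{KL}(P\|M)=0$, then $\mathrm{KL}(P_f\|M_f)=0$ for every finite-alphabet $f$, so by the finite equality case $P_f=M_f$ for all such $f$; applying this to the indicator $f=\One_E$ for an arbitrary $E\in\Fil$ yields $P(E)=M(E)$, whence $P=M$.

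I expect no genuine obstacle here — this is a standard fact, cited from \cite{Gray2011}. The only points requiring mild care are the conventions around $0\ln(0/0)$ and the case $P\not\ll M$ in the finite step, and in the general equality case one must remember to test against a rich enough family of finite-alphabet random variables; indicator functions are the natural choice and suffice to separate the two measures.
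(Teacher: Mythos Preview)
Your proof is correct and complete. Note, however, that the paper does not actually prove this lemma --- it is stated as a citation to \cite{Gray2011}, Lemma 5.2.1, with no argument given. There is therefore nothing in the paper to compare your approach against; what you have written is the standard proof (log-sum/Jensen in the finite case, then the supremum definition plus indicator functions to separate measures in the general case), and it would serve perfectly well as a self-contained justification were one needed.
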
 

Two cases of KL will be relevant to us.

\begin{lemma}[Relative entropy density \cite{Gray2011} Lemma 5.2.3] \label{lemma:relative_entropy_density} For any probability measures $P,M$ on a common alphabet, if $P\ll M$, then the Radon-Nikodym derivative $f:= \mathrm{d}P/\mathrm{d}M$ exists, is called the \textit{relative entropy density} of $P$ with respect to $M$,  and verifies
	\[
	\mathrm{KL}(P\|M)= \int_{\Omega} \ln f(\omega)\mathrm{d}P(\omega) = \int_{\Omega} f(\omega)\ln f(\omega)\mathrm{d}M(\omega).
	\]
	In this case, if $\Omega$ is finite then $f(\omega)= P(\omega)/M(\omega)$ and KL reduces to the finite alphabet case; if $\Omega=\R^d$ and $P,M\ll \mathrm{Leb}$ with densities $f_P,f_M$, respectively, then 
	\[
	\mathrm{KL}(P\|M) = \int_{\R^d} f(x) \ln \frac{f(x)}{g(x)} \mathrm{d}x.
	\]
	On the other hand, if $P$ is not absolutely continuous with respect to $M$, then
	\[
	\mathrm{KL}(P\|M)=\infty.
	\]
\end{lemma}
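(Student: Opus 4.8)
The plan is to argue directly from the paper's definition $\mathrm{KL}(P\|M)=\sup_f\mathrm{KL}(P_f\|M_f)$, rephrased as a supremum over finite measurable partitions $\mathcal Q=\{Q_1,\dots,Q_n\}$ of $\Omega$ with $\mathrm{KL}_{\mathcal Q}(P\|M):=\sum_i P(Q_i)\ln\frac{P(Q_i)}{M(Q_i)}$ (a finite-alphabet $f$ is the same data as the partition into its level sets, and $\mathrm{KL}(P_f\|M_f)=\mathrm{KL}_{\mathcal Q}(P\|M)$ for that partition). Assume $P\ll M$, let $f:=\mathrm dP/\mathrm dM$ be the Radon--Nikodym derivative, and put $\phi(t):=t\ln t$ with $\phi(0):=0$; this $\phi$ is convex on $[0,\infty)$ and bounded below by $-1/e$. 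For the upper bound, fix $\mathcal Q$: for each cell with $M(Q_i)>0$ the ratio $P(Q_i)/M(Q_i)=M(Q_i)^{-1}\int_{Q_i}f\,\mathrm dM$ is the $M$-average of $f$ over $Q_i$, so Jensen's inequality for $\phi$ gives $P(Q_i)\ln\frac{P(Q_i)}{M(Q_i)}=M(Q_i)\,\phi\!\bigl(P(Q_i)/M(Q_i)\bigr)\le\int_{Q_i}\phi(f)\,\mathrm dM$, while cells with $M(Q_i)=0$ have $P(Q_i)=0$ by $P\ll M$ and contribute $0$. Summing over $i$ and taking the supremum over $\mathcal Q$ yields $\mathrm{KL}(P\|M)\le\int_\Omega f\ln f\,\mathrm dM$, a quantity well defined in $(-\infty,+\infty]$ since $\phi(f)^-\le 1/e$ is $M$-integrable.

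For the matching lower bound I would exhibit one sequence of finite partitions along which $\mathrm{KL}_{\mathcal Q}$ converges to $\int f\ln f\,\mathrm dM$. Take $\mathcal Q_n$ to group $\omega$ according to which dyadic interval of length $2^{-n}$ contains $f(\omega)$, together with one extra cell $\{f\ge n\}$; then $\sigma(\mathcal Q_n)\uparrow\sigma(f)$ up to $M$-null sets, so $g_n:=\E_M[f\mid\mathcal Q_n]$, which is constant $=P(Q_i)/M(Q_i)$ on $Q_i$, converges to $f$ $M$-a.e.\ by the martingale convergence theorem. A direct computation gives $\mathrm{KL}_{\mathcal Q_n}(P\|M)=\int\phi(g_n)\,\mathrm dM$, and this sequence is nondecreasing in $n$ by convexity (refinement monotonicity). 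Since $\phi$ is continuous, $\phi(g_n)\to\phi(f)$ $M$-a.e., and since $\phi(g_n)\ge-1/e$, Fatou's lemma gives $\int\phi(f)\,\mathrm dM\le\liminf_n\mathrm{KL}_{\mathcal Q_n}(P\|M)\le\mathrm{KL}(P\|M)$. Combined with the upper bound this proves $\mathrm{KL}(P\|M)=\int f\ln f\,\mathrm dM$.

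The remaining identities are bookkeeping. The equality $\int f\ln f\,\mathrm dM=\int\ln f\,\mathrm dP$ is the transfer rule $\int h\,\mathrm dP=\int hf\,\mathrm dM$ applied with $h=\ln f$ on $\{f>0\}$, using $P(\{f=0\})=\int_{\{f=0\}}f\,\mathrm dM=0$ and that $(\ln f)^-f\le 1/e$ keeps the negative part $P$-integrable. If $\Omega$ is finite, $f(\omega)=P(\{\omega\})/M(\{\omega\})$ is the density against counting measure and the formula collapses to the finite-alphabet definition; if $\Omega=\R^d$ with $P,M\ll\mathrm{Leb}$ of densities $f_P,g$, the chain rule gives $f=f_P/g$ $M$-a.e.\ on $\{g>0\}$ (and $P(\{g=0\})=0$ by $P\ll M$), so $\int f\ln f\,\mathrm dM=\int f_P\ln(f_P/g)\,\mathrm dx$. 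Finally, if $P\not\ll M$, choose $N$ with $M(N)=0$ but $P(N)>0$; already the two-cell partition $\{N,\Omega\setminus N\}$ has $\mathrm{KL}_{\mathcal Q}(P\|M)=\infty$ under the stated finite-alphabet convention, hence $\mathrm{KL}(P\|M)=\infty$.

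The one genuinely delicate step is the lower bound: both choosing partitions whose $M$-conditional expectations of $f$ converge to $f$, and justifying the interchange of limit and integral when $\int f\ln f\,\mathrm dM=+\infty$ or where $f$ is small and $\phi(f)<0$. The uniform bound $\phi\ge-1/e$ is exactly what makes $\int\phi(f)\,\mathrm dM$ unambiguously defined and points Fatou in the useful direction; an alternative that bypasses the martingale argument is to invoke the Donsker--Varadhan variational formula for $\mathrm{KL}$ and test it against bounded truncations of $\ln f$. Throughout, the hypothesis $P\ll M$ is used precisely to discard $M$-null partition cells (and the set $\{g=0\}$ in the $\R^d$ case), so that the displayed formulas involve only genuinely defined logarithms.
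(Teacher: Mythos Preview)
The paper does not prove this lemma; it is quoted as background from Gray's textbook (the citation \cite{Gray2011} Lemma 5.2.3 is in the statement itself), so there is no ``paper's own proof'' to compare against. Your argument is a correct, self-contained proof of the result as stated: the Jensen upper bound $\mathrm{KL}_{\mathcal Q}(P\|M)\le\int\phi(f)\,\mathrm dM$ is standard, and your lower bound via level-set partitions of $f$ works---in fact you do not even need the martingale convergence theorem, since for any $\omega$ with $f(\omega)<\infty$ (an $M$-full set, as $\int f\,\mathrm dM=1$) your $\omega$ eventually lands in a dyadic cell on which $|g_n-f|\le 2^{-n}$, giving $g_n\to f$ pointwise $M$-a.e.\ directly. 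The uniform lower bound $\phi\ge -1/e$ is indeed the key to making Fatou run in the right direction and to keeping all the integrals well defined in $(-\infty,+\infty]$. The bookkeeping for the finite and $\R^d$ specializations and the two-cell argument for the $P\not\ll M$ case are fine.
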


%\textcolor{red}{add the discretization convergence}

%\begin{prop}[Properties of KL]
%	\begin{enumerate}
	%		\item Divergence inequality \cite[Lemma 5.2.1]{Gray2011}: 
	%		\[
	%		\mathrm{KL}(P\|M)\geq 0
	%		\]
	%		with equality if and only if $P=M$.
	%		
	%		\item Relative entropy density \cite[Lemma 5.2.3]{Gray2011}: if $P\ll M$, then the Radon-Nikodym derivative $f:= \mathrm{d}P/\mathrm{d}M$ exists, is called the \textit{relative entropy density} of $P$ with respect to $M$,  and verifies
	%		\[
	%		\mathrm{KL}(P\|M)= \int_{\Omega} \ln f(\omega)\mathrm{d}P(\omega) = \int_{\Omega} f(\omega)\ln f(\omega)\mathrm{d}M(\omega).
	%		\]
	%		In this case, if $\Omega$ is finite then $f(\omega)= P(\omega)/M(\omega)$ and KL reduces to the finite alphabet case; if $\Omega=\R^d$ and $P,M\ll \mathrm{Leb}$ with densities $f_P,f_M$, respectively, then 
	%		\[
	%		\mathrm{KL}(P\|M) = \int_{\R^d} f(x) \ln \frac{f(x)}{g(x)} \mathrm{d}x.
	%		\]
	%		On the other hand, if $P$ is not absolutely continuous with respect to $M$, then
	%		\[
	%		\mathrm{KL}(P\|M)=\infty.
	%		\]
	%		\item Chain rule \cite[Corollary 5.2.1]{Gray2011}: for probability measures $P\ll Q\ll M$, we have
	%		\[
	%		\mathrm{KL}(P\|M) = \mathrm{KL}(P\|Q) + \E_P\left[\ln \frac{\mathrm{d}Q}{\mathrm{d}M}\right]
	%		\]
	%		
	%	\end{enumerate}
%\end{prop}
%
%For more KL bounds and identities in terms of restricted/exhaustive $\sigma$-algebras, conditional expectation, variational/TV distance, entropy density, supremum of cumulant generating functions, see \cite{Gray2011}.

%\subsection{Entropy and average mutual information}
\subsection{Mutual information} \label{sec:entropy_MI}
%\subsubsection*{Finite alphabet entropy and average mutual information} 
Define
the \textit{mutual information} between two random variables $X$ and $Y$ to be
\[
I(X;Y) := \mathrm{KL}(P_{XY}\| P_X\otimes P_Y).
\]
It can be shown \cite[Chapter 2.5]{Gray2011} that the (Shannon) \textit{entropy} of $X$ (defined as $H(X):= -\sum_{x\in A_X} p_X(x) \ln p_X(x)$ in the discrete alphabet case) can be recovered by the mutual information with $X$ itself $H(X)=I(X;X)$ and therefore $I(X;Y) = H(X) + H(Y) - H(X,Y)$.

%First we consider the finite alphabet case: let $X,Y$ be random variables with finite alphabets $A_X,A_Y$. 
%\[
%I(X;Y) := H(X) + H(Y) - H(X,Y).
%\]

%\textcolor{red}{
	%	(i) why entropy has the form $\sum p\log p$, cf. \url{https://metaphor.ethz.ch/x/2017/hs/401-3375-67L/sc/Chapter1.pdf};
	%}

%By definition, we have
%\begin{align*}
%	I(X;Y) =& \sum_{x,y} \PP(X=x,Y=y) \ln \frac{\PP(X=x,Y=y)}{\PP(X=x) \PP(Y=y)} = \mathrm{KL}(P_{X,Y}\| P_X\otimes P_Y)\\
%	=&  \sum_{x,y} p_{X,Y}(x,y) \ln \frac{p_{X,Y}(x,y)}{p_X(x)p_Y(y)}\\
%	=&\sum_{x,y} p_{X,Y}(x,y) \ln \frac{p_{X|Y}(x|y)}{p_X(x)} = H(X) - H(X|Y) \\
%	=& \sum_{x,y} p_{X,Y}(x,y) \ln \frac{p_{Y|X}(y|x)}{p_Y(y)} = H(Y) - H(Y|X).
%\end{align*}
%
% the \textit{entropy} of $X$ to be
%\[
%H(X):= -\sum_{x\in A_X} p_X(x) \ln p_X(x),
%\]
%and 

%\textcolor{red}{
	%why average? of many KL against distribution of $X,Y$?
	%}

%\begin{prop}[Chain rule;\cite{Gray2011} Corollary 2.5.1] Consider random variables $X_1,\cdots,X_n,Y$ with finite alphabets. Then,
%	\[
%	I(Y; (X_1,\cdots,X_n)) = \sum_{i=1}^n I(Y;X_i| X_1,\cdots,X_{i-1})
%	\]
%\end{prop}
%
%Note $p_{X,X}(x,x)=p_X(x)$ and hence
%\[
%H(X)= I(X;X).
%\]

\begin{remark}\label{rem:mutual_information_deviation_indep}
	In light of Lemma \ref{lemma:KL_div_ineq}, it is clear that $I(X;Y)$ equals zero precisely when $X,Y$ are independent and quantifies their deviation from independence otherwise. The product of marginals $P_X\otimes P_Y$ serves as the reference independent model against which the joint distribution $P_{XY}$ is compared. More precisely, if $(X',Y')$ has joint distribution $P_X\otimes P_Y$, then $X',Y'$ are independent and have same marginal dsitributions as $X,Y$.
\end{remark}

\subsection{Conditional mutual information} \label{sec:conditional_MI_transfer_causation_entropy}
First we consider the finite alphabet case: three random variables $X,Y,Z$ with finite alphabets $A_X,A_Y,A_Z$, each equipped with the power-set $\sigma$-algebra $\Bil_{A_*}=2^{A_*}$, $*=X,Y,Z$. 
Define the \textit{conditional mutual information of $X,Y$ given $Z$} to be
\begin{equation}\label{eq:cMI_defn}
	I(X;Y|Z) := \mathrm{KL}(P_{XYZ}| P_{X\times Y|Z}),
\end{equation}
where $P_{X\times Y|Z}$ is a probability distribution on $A_X\times A_Y\times A_Z$ defined by
\begin{equation}\label{eq:cMI_discrete_defn}
	P_{X\times Y|Z} (B_X \times B_Y\times B_Z) := \sum_{z\in B_Z} \PP(X\in B_X| Z=z) \PP(Y\in B_Y| Z=z) \PP(Z=z)
\end{equation}
for any $B_X\in\Bil_{A_X}$, $B_Y\in\Bil_{A_Y}$ and $B_Z\in\Bil_{A_Z}$. Here, the conditional probability is the usual one $\PP(F|E)=\frac{\PP(F\cap E)}{\PP(E)}$ provided that $\PP(E)>0$.
\begin{remark}\label{rem:cMI}
	As discussed in the Introduction, conditional mutual information is designed to quantify the deviation from conditional independence of $X,Y$ given $Z$. And $P_{X\times Y|Z}$ is designed to serve as the conditional independent model against which to compare the joint distribution $P_{XYZ}$, cf. the role of $P_{X}\otimes P_Y$ in the definition of $I(X;Y)$ as discussed in Remark \ref{rem:mutual_information_deviation_indep}.
	More precisely, consider new random variables $X',Y',Z'$ with joint distribution $P_{X\times Y|Z}$ and observe
	\begin{itemize}
		\item $X',Y',Z'$ have the same marginal distributions as $X,Y,Z$: $P_{X'}=P_X$, $P_{Y'}=P_Y$, $P_{Z'}=P_Z$;
		\item $X',Y'$ have the same conditional marginal distributions given $Z'$ as $X,Y$ given $Z$: $\PP(X\in B_X|Z=z)=\PP(X'\in B_X|Z'=z)$ and 
		$\PP(Y\in B_Y|Z=z)= \PP(Y'\in B_Y|Z'=z)$;
		\item $X',Y'$ are conditionally independent given $Z'$: $\PP(X'\in B_X,Y'\in B_Y|Z'=z)= \PP(X'\in B_X|Z'=z)\PP(Y'\in B_Y|Z=z)$.
	\end{itemize}
	In other words, $P_{X\times Y|Z}$ is a ``Markovization'' of the joint distribution $P_{XYZ}$ in the sense that the modified random variables $X',Y',Z'$ form a Markov chain $Y'\to Z'\to X'$ (or $X'\to Z'\to Y'$) because the information about the state of $Y'$, in addition to that of $Z'$, does not further resolve the uncertainty about the state of $X'$ (the same holds with $X',Y'$ swapped).
\end{remark}

%By definition, we have
%\begin{align*}
%	I(X;Y|Z) =& \sum_{x,y,z} p_{XYZ}(x,y,z) \ln\frac{p_{XYZ}(x,y,z)}{p_{X|Z}(x|z) p_{Y|Z}(y|z) p_Z(z)}\\
%	=& \sum_{x,y,z} p_{XYZ}(x,y,z) \ln\frac{p_{XY|Z}(x,y|z)}{p_{X|Z}(x|z) p_{Y|Z}(y|z) }.
%\end{align*}
%From the first equation we deduce
%\begin{align*}
%	\frac{p_{XYZ}}{p_{X|Z} p_{Y|Z} p_Z} = \frac{p_{XYZ}}{p_{XZ} p_{Y|Z}} = \frac{p_{Y|XZ}}{ p_{Y|Z}}  = \frac{p_{Y|XZ} / p_Y}{p_{Y|Z}/p_Y}
%\end{align*}
%and hence
%\[
%I(X;Y|Z) = I(Y; (X,Z)) - I(Y;Z).
%\]

%\subsection{cMI on standard alphabets}

To generalize the definition of $I(X;Y|Z)$ in Eq. \ref{eq:cMI_defn}, the main challenge lies with the conditional probabilities appearing in the definition (\ref{eq:cMI_discrete_defn}) of $P_{X\times Y|Z}$. 
In general, we may well have $\PP(Z=z)=0$ for each $z\in A_Z$, for example, take $Z$ to be uniformly distributed on $A_Z=[0,1]$, or any other distribution absolutely continuous with respect to Lebesgue. This makes it impossible to define $\PP(F|Z=z)$ in the same way as the discrete alphabet case $\frac{\PP(F\cap \{Z=z\})}{\PP(Z=z)}$. 

This challenge can be met by (i) interpreting the conditional probability $\PP(X\in B_X|Z=z)$, rather than a fraction, as a Radon-Nikodym derivative for fixed $B_X\in\Bil_{A_X}$  and (ii) requiring that the alphabets of $X,Y,Z$ be ``standard'' measurable spaces so that $\PP(X\in B_X|Z=z)$ is well-defined as regular conditional probability simultaneously for all $B_X\in\Bil_{A_X}$ and similarly for $\PP(Y\in B_Y|Z=z)$. In \cite{Gray2011}, there is an even more general definition beyond standard alphabets. Since the standard alphabet already covers the practically relevant cases such as Polish spaces, we shall contain our discussion in the standard alphabet case and leave the details in the Appendix.

Consider three random variables $X,Y,Z$ on a common probability space $(\Omega,\Fil,\PP)$ with standard alphabets $(A_X,\Bil_{A_X})$, $(A_Y,\Bil_{A_Y})$, $(A_Z,\Bil_{A_Z})$, respectively. 
See Appendix \ref{sec:RCP_standard_space} for details. Define the \textit{conditional average mutual information} as in Eq. (\ref{eq:cMI_defn}) where the Markovization $P_{X\times Y|Z}$ is given in terms of regular conditional probabilities, for $B_X\in\Bil_{A_X}$, $B_Y\in\Bil_{A_Y}$, $B_Z\in \Bil_{A_Z}$
%\cite[Chapter 5.5]{Gray2011}  
%to be
%	\[
%	I(X;Y|Z):= \mathrm{KL}\left(P_{XYZ}\| P_{X\times Y|Z}\right),
%	\]
%	which makes sense when\footnote{There is a more general definition beyond standard alphabets.} all three alphabets are standard, so that $P_{X\times Y|Z}$ exists, cf. \cite[pp 106]{Gray2011} and \cite[Chapter 5.10]{Gray2009}:
\begin{align*}
	P_{X\times Y|Z}(B_X\times B_Y\times B_Z):=& \int_{Z^{-1}B_Z} \PP(X\in B_X|\sigma(Z)) \PP(Y\in B_Y| \sigma(Z)) \mathrm{d}\PP \\
	=& \int_{B_Z} \PP(X\in B_X|Z=z) \PP(Y\in B_Y|Z=z) \mathrm{d}P_Z(z).
\end{align*}

\begin{remark}
	Note that $P_{X\times Y|Z}$ is a deterministic probability measure and hence the conditional mutual information $I(X;Y|Z)$ is a deterministic object on $A_X\times A_Y\times A_Z$, even though the notation suggests some conditioning. As the construction above shows, the randomness from conditioning on $Z$ is averaged out.
	
	In light of Lemma \ref{lemma:KL_div_ineq}, $I(X;Y|Z)$ equals zero precisely when $X,Y$ are conditionally independent given $Z$ and quantifies the deviation from this conditional independence otherwise.
\end{remark}

Since $P_{XYZ},P_{X\times Y|Z}$ both have $Z$-marginals equal to $P_Z$ by construction, they admit disintegrations with respect to $P_Z$ denoted by $(P_{XYZ})_z$ and $(P_{X\times Y|Z})_z$, which coincide with the regular conditional probabilities: for $P_Z$-a.e. $z\in A_Z$, and all $B_X\in\Bil_{A_X},B_Y\in\Bil_{A_Y}$, we have
\begin{align*}
	(P_{XYZ})_z(B_X\times B_Y) =& \PP(X\in B_X,Y\in B_Y|Z=z),\\
	(P_{X\times Y|Z})_z(B_X\times B_Y) =& \PP(X\in B_X|Z=z)\PP(Y\in B_Y|Z=z).
\end{align*}
See Appendix \ref{sec:RCP_standard_space} for more details.
We will sometimes prefer the disintegration notation to the regular conditional probability notation for clarity of presentation. 

\begin{defn}[$Z$-conditioned random variables]
For each $z\in A_Z$, define the \textit{$Z$-conditioned random variables $X_z,Y_z$} with alphabets $(A_X,\Bil_{A_X}),(A_Y,\Bil_{A_Y})$, respectively, and joint distribution 
\begin{equation}\label{eq:defn_X_z_Y_z}
	P_{X_zY_z}:=(P_{XYZ})_z,~~~~z\in A_Z.
\end{equation}
\end{defn}

Then, their marginal distributions are given by
\begin{align*}
	P_{X_z}(B_X)=& P_{X_zY_z}(B_X\times A_Y) = \PP(X\in B_X|Z=z),~~~~z\in A_Z,B_X\in \Bil_{A_X}\\
	P_{Y_z}(B_Y) =& P_{X_zY_z}(A_X\times B_Y) = \PP(Y\in B_Y|Z=z),~~~~z\in A_Z,B_Y\in \Bil_{A_Y}.
\end{align*}
Hence,
\[
(P_{X\times Y|Z})_z = P_{X_z}\otimes P_{Y_z}
\]
and
\[
I(X_z;Y_z) = \mathrm{KL}(P_{X_zY_z}\|P_{X_z}\otimes P_{Y_z}) = \mathrm{KL}((P_{XYZ})_z\| (P_{X\times Y|Z})_z).
\]
The intuition behind the above construction of $X_z,Y_z$ is to consider them as the disintegrated versions of $X,Y$ on the $z$-slice $A_X\times A_Y\times\{z\}$. The next proposition shows that the conditional mutual information $I(X;Y|Z)$ is the average of $I(X_z;Y_z)$ across all such $z$-slices.

\begin{proposition}[Average of disintegrated MI] \label{prop:average_disintegrated_MI}
	Consider three random variables $X,Y,Z$ on a common probability space $(\Omega,\Fil,\PP)$ with standard alphabets $(A_X,\Bil_{A_X})$, $(A_Y,\Bil_{A_Y})$, and $(A_Z,\Bil_{A_Z})$, respectively. Then, the conditional mutual information $I(X;Y|Z)$ is the $P_Z$-average of mutual information $I(X_z;Y_z)=\mathrm{KL}((P_{XYZ})_z\| (P_{X\times Y|Z})_z)$ between the $z$-conditioned random variables $X_z,Y_z$.
	% measures $(P_{XYZ})_z, (P_{X\times Y|Z})_z$.
	More precisely, if $(P_{XYZ})_z \ll (P_{X\times Y|Z})_z$ for $P_Z$-a.e. $z\in A_Z$, then the Radon-Nikodym derivative 
	\[
	\frac{\mathrm{d}(P_{XYZ})_z}{\mathrm{d}(P_{X\times Y|Z})_z}(x,y) = \frac{\mathrm{d}P_{XYZ}}{\mathrm{d}P_{X\times Y|Z}}(x,y,z)~~~~\text{for $P_{X\times Y|Z}$-a.e. $(x,y,z)$}
	\]
	and hence
	\[
	I(X;Y|Z) = 	\int_{A_Z} I(X_z;Y_z)
	%\mathrm{KL}\left((P_{XYZ})_z \| (P_{X\times Y|Z})_z\right) 
	\mathrm{d}P_Z(z);
	\]
	otherwise, there is $B_Z\in\Bil_{A_Z}$ with $P_Z(B_Z)>0$ and $I(X_z;Y_z)=\mathrm{KL}\left((P_{XYZ})_z \| (P_{X\times Y|Z})_z\right) =\infty$ for each $z\in B_Z$, in which case  $I(X;Y|Z)=\infty$.
\end{proposition}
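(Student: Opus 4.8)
The plan is to prove a fiberwise version of the relative-entropy-density identity (Lemma~\ref{lemma:relative_entropy_density}) and then integrate it against $P_Z$ using the defining property of disintegrations. \textbf{Step 1 (disintegrating absolute continuity).} I would first establish
\[
P_{XYZ}\ll P_{X\times Y|Z}\ \Longleftrightarrow\ (P_{XYZ})_z\ll(P_{X\times Y|Z})_z\ \text{for $P_Z$-a.e. }z,
\]
together with the identification $\mathrm{d}(P_{XYZ})_z/\mathrm{d}(P_{X\times Y|Z})_z(x,y)=f(x,y,z)$ for $P_Z$-a.e. $z$ in the affirmative case, where $f:=\mathrm{d}P_{XYZ}/\mathrm{d}P_{X\times Y|Z}$. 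For the forward implication, since $P_{XYZ}=f\cdot P_{X\times Y|Z}$, disintegrating $P_{XYZ}$ directly and disintegrating it again via this identity, then equating the two expressions for $P_{XYZ}(B_X\times B_Y\times B_Z)$, gives
\[
\int_{B_Z}(P_{XYZ})_z(B_X\times B_Y)\,\mathrm{d}P_Z(z)=\int_{B_Z}\Big(\int_{B_X\times B_Y}f(x,y,z)\,\mathrm{d}(P_{X\times Y|Z})_z(x,y)\Big)\mathrm{d}P_Z(z);
\]
as $B_Z$ is arbitrary the two inner integrands coincide for $P_Z$-a.e. $z$, and letting $B_X\times B_Y$ run over a countable algebra generating the product $\sigma$-algebra $\Bil_{A_X}\otimes\Bil_{A_Y}$ (available since the alphabets are standard, hence countably generated) and intersecting the countably many exceptional $P_Z$-null sets yields $(P_{XYZ})_z=f(\cdot,\cdot,z)\,(P_{X\times Y|Z})_z$ for $P_Z$-a.e. $z$. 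The converse is a null-set chase: for any $N$ with $P_{X\times Y|Z}(N)=0$, the identity $\int(P_{X\times Y|Z})_z(N_z)\,\mathrm{d}P_Z=0$ forces $(P_{X\times Y|Z})_z(N_z)=0$ and hence, by fiberwise absolute continuity, $(P_{XYZ})_z(N_z)=0$ for $P_Z$-a.e. $z$, so $P_{XYZ}(N)=\int(P_{XYZ})_z(N_z)\,\mathrm{d}P_Z=0$; here $N_z$ denotes the $z$-section of $N$.

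\textbf{Step 2 (the absolutely continuous case).} Assume the equivalent conditions of Step 1 and put $g:=f\ln f$, a measurable function on $A_X\times A_Y\times A_Z$ with $g\geq-1/e$, so that $g^-$ is bounded by $1/e$ and $P_{X\times Y|Z}$-integrable. Lemma~\ref{lemma:relative_entropy_density} applied on each fiber gives $I(X_z;Y_z)=\int g(\cdot,\cdot,z)\,\mathrm{d}(P_{X\times Y|Z})_z$ for $P_Z$-a.e. $z$, with the negative part of the integrand integrable a.e. by Tonelli applied to $g^-$. Applying Tonelli through the disintegration to the nonnegative functions $g^+$ and $g^-$ separately, and subtracting (legitimate because the $g^-$ integral is always finite), gives
\[
\int_{A_Z}I(X_z;Y_z)\,\mathrm{d}P_Z(z)=\int g\,\mathrm{d}P_{X\times Y|Z}=\mathrm{KL}(P_{XYZ}\|P_{X\times Y|Z})=I(X;Y|Z),
\]
where the Tonelli step also delivers the measurability of $z\mapsto I(X_z;Y_z)$, and the chain stays valid when the common value is $+\infty$.

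\textbf{Step 3 (the singular case).} If the conditions of Step 1 fail, then $P_{XYZ}\not\ll P_{X\times Y|Z}$, so $I(X;Y|Z)=\mathrm{KL}(P_{XYZ}\|P_{X\times Y|Z})=\infty$ by Lemma~\ref{lemma:relative_entropy_density}; and, by Step 1, there is $B_Z\in\Bil_{A_Z}$ with $P_Z(B_Z)>0$ on which $(P_{XYZ})_z\not\ll(P_{X\times Y|Z})_z$, whence $I(X_z;Y_z)=\infty$ for $z\in B_Z$. Since $I(X_z;Y_z)\geq0$ everywhere, $\int_{A_Z}I(X_z;Y_z)\,\mathrm{d}P_Z=\infty$ as well, and both sides of the asserted identity equal $+\infty$.

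I expect the crux to be Step 1, specifically interchanging the quantifiers ``for $P_Z$-a.e. $z$'' and ``for every test set'': this is exactly where the standard-alphabet hypothesis (countable generation) and the essential uniqueness of disintegrations are genuinely needed. Once Step 1 is in place, the rest is routine bookkeeping with Tonelli, the divergence inequality, and the elementary estimate $t\ln t\geq-1/e$.
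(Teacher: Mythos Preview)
Your proof is correct and follows essentially the same route as the paper's: identify the fiberwise Radon--Nikodym derivative with the global one via the defining property of disintegrations, then apply Lemma~\ref{lemma:relative_entropy_density} on each fiber and globally. Your version is in fact slightly more careful than the paper's in two places: you justify the interchange of integrals via Tonelli and the elementary bound $t\ln t\geq -1/e$ (the paper simply integrates $\ln f$ against $(P_{XYZ})_z$ and then $P_Z$ without comment), and in the singular case you obtain $P_{XYZ}\not\ll P_{X\times Y|Z}$ from the contrapositive of your Step~1 forward implication, whereas the paper assembles a witness set $\bigcup_{z\in B_Z}B_{XY}^{(z)}\times\{z\}$ whose measurability it does not address.
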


%\textcolor{red}{make $X_z,Y_z$ random variables so that $I(X_z,Y_z)=\mathrm{KL}\left((P_{XYZ})_z \| (P_{X\times Y|Z})_z\right)$}

%\begin{remark}
%	This proposition allows us to analyze cMI in a disintegrated way $\mathrm{KL}\left((P_{XYZ})_z \| (P_{X\times Y|Z})_z\right)$ on each $z$-slice and compose according to $P_Z$.
%\end{remark}

\textit{Proof.}
%Assume the alphabets $A_X,A_Y,A_Z$ are all standard. By \cite[Corollary 5.8.1]{Gray2009}, it follows that the disintegrations/rcd's $(P_{XYZ})_z$ and $(P_{X\times Y|Z})_z$ exist. 
%On the one hand, if 
First consider $(P_{XYZ})_z \ll (P_{X\times Y|Z})_z$ for $P_Z$-a.e. $z\in A_Z$. Then the Radon-Nikodym derivative $\frac{\mathrm{d} (P_{XYZ})_z}{\mathrm{d} (P_{X\times Y|Z})_z}$ exists for $P_Z$-a.e. $z\in A_Z$. Integrating its logarithm against $(P_{XYZ})_z$ yields, according to Lemma \ref{lemma:relative_entropy_density},
\begin{align*}
	\mathrm{KL}\left((P_{XYZ})_z \| (P_{X\times Y|Z})_z\right) = \int_{A_X\times A_Y} \ln	\frac{\mathrm{d} (P_{XYZ})_z}{\mathrm{d} (P_{X\times Y|Z})_z} \mathrm{d}(P_{XYZ})_z.
\end{align*}
Further integrating the above equation against $P_Z$ yields, by definition of disintegration,
\begin{align*}
	\int_{A_Z}\mathrm{KL}\left((P_{XYZ})_z \| (P_{X\times Y|Z})_z\right) \mathrm{d}P_Z(z) =&\int_{A_Z} \int_{A_X\times A_Y} \ln	\frac{\mathrm{d} (P_{XYZ})_z}{\mathrm{d} (P_{X\times Y|Z})_z} \mathrm{d}(P_{XYZ})_z \mathrm{d}P_Z(z)\\
	=& \int_{A_X\times A_Y\times A_Z} \ln	\frac{\mathrm{d} (P_{XYZ})_z}{\mathrm{d} (P_{X\times Y|Z})_z}  \mathrm{d}P_{XYZ}.
\end{align*}
For any $B_X\in \Bil_{A_X},B_Y\in\Bil_{A_Y},B_Z\in\Bil_{A_Z}$, by definition of disintegration, we have
\begin{align*}
	&\int_{B_X\times B_Y\times B_Z} \frac{\mathrm{d}(P_{XYZ})_z}{\mathrm{d}(P_{X\times Y|Z})_z}(x,y) \mathrm{d}P_{X\times Y|Z} (x,y,z) \\
	=& \int_{B_Z} \int_{B_X\times B_Y} \frac{\mathrm{d}(P_{XYZ})_z}{\mathrm{d}(P_{X\times Y|Z})_z}(x,y) \mathrm{d}(P_{X\times Y|Z})_z (x,y) \mathrm{d}P_Z(z)\\
	=& \int_{B_Z} (P_{XYZ})_z(B_X\times B_Y) \mathrm{d}P_Z(z)\\
	=& P_{XYZ}(B_X\times B_Y\times B_Z).
\end{align*}
By uniqueness of Radon-Nikodym derivative, we conclude
\[
\frac{\mathrm{d}(P_{XYZ})_z}{\mathrm{d}(P_{X\times Y|Z})_z}(x,y) = \frac{\mathrm{d}P_{XYZ}}{\mathrm{d}P_{X\times Y|Z}}(x,y,z)~~~~\text{for $P_{X\times Y|Z}$-a.e. $(x,y,z)$}.
\]
We continue
\begin{align*}
	\int_{A_Z}\mathrm{KL}\left((P_{XYZ})_z \| (P_{X\times Y|Z})_z\right) \mathrm{d}P_Z(z) 
	=& \int_{A_X\times A_Y\times A_Z} \ln	\frac{\mathrm{d} P_{XYZ}}{\mathrm{d} P_{X\times Y|Z}}  \mathrm{d}P_{XYZ} = I(X;Y|Z),
\end{align*}
by Lemma \ref{lemma:relative_entropy_density}. 
This proves the first assertion.

Now we consider the other case: there is some $B_Z$ with $P_Z(B_Z)>0$ and for each $z\in B_Z$, there is some $B_{XY}^{(z)}$ with $(P_{XYZ})_z(B_{XY}^{(z)})>0=(P_{X\times Y|Z})_z$, then the set
\[
B_{XYZ}:= \bigcup_{z\in B_Z} B_{XY}^{(z)}\times\{z\}
\]
has the property that
\[
P_{XYZ}(B_{XYZ}) >0=P_{X\times Y|Z}(B_{XYZ}).
\]
In particular, $P_{XYZ}$ is not absolutely continuous with respect to $P_{X\times Y|Z}$. Hence, by Lemma \ref{lemma:relative_entropy_density}, we have
\[
I(X;Y|Z) = \mathrm{KL}(P_{XYZ}\|P_{X\times Y|Z})=\infty.  \qed
\]

\begin{eg}[Transfer and causation entropy]
	%\subsection{Transfer entropy}
	Consider a stochastic process $X=(X_0,X_1,\cdots)$ taking values in $(A_X,\Bil_{A_X})$ and another stochastic process $Y=(Y_0,Y_1,\cdots)$ taking values in $(A_Y,\Bil_{A_Y})$. 
	
	As in \cite[Chapter 9.8.1]{Bollt2013}, we are interested to quantify the information flow from $Y$ to $X$ at time $t$, conditioned on some history $X_t^{(k)}=(X_t,\cdots,X_{t-k+1})$ of $X$ itself $k$-steps into the past. If there is no such information flow, then $X_{t+1},Y_t^{(l)}$ should be conditionally independent given $X_t^{(k)}$, i.e.,
	\[
	I(X_{t+1}; Y_t^{(l)}|X_t^{(k)})=0.
	\]
	Otherwise, the information flow can be quantified by the deviation from conditional independence. This motivates our definition
	\[
	T_{Y\to X,t}:= I(X_{t+1}; Y_t^{(l)}|X_{t}^{(k)}) = \mathrm{KL}\left( \left. P_{X_{t+1}X_{t}^{(k)}Y_t^{(l)}} \right\| P_{X_{t+1}\times Y_{t}^{(l)}|X_{t}^{(k)}} \right),
	\]
	which has a similar form to the discrete version given by eq. (9.128) in \cite{Bollt2013}. Variations such as unlimited memory can also be considered.

	%\subsection{Causation entropy}
	%Similarly, the causation entropy (looking 1 step into the past) as defined in 
	The causation entropy is a fruitful generalization of TE in the context of a network of stochastic processes $X^v$ indexed by nodes $v\in V:=\{1,\cdots,n\}$, where each $X^v = (X^v_t)_{t=0,1,\cdots}$. Given three collections $I,J,K\subseteq V$ of nodes, CE (looking 1 step into the past)
	\cite{Sun2015} is defined to be
	\[
	C_{J\to I|K,t}: = I(X_{t+1}^{(I)};X_{t}^{(J)}|X_{t}^{(K)}) = \mathrm{KL} \left(\left. P_{X_{t+1}^{(I)} X_t^{(K)} X_t^{(J)}}\right\| P_{X_{t+1}^{(I)}\times X_t^{(J)}| X_t^{(K)}}\right).
	\]
	In a discovery algorithm, \cite{Sun2015} uses $C_{J\to I|K,t}$ to quantify the information flowing from nodes $J$ to nodes $I$ conditioned on nodes $K$, where $I$ nodes are the potential neighbors of $J$ nodes under consideration and $K$ is the collection of known neighbors of $J$; the authors identify the most likely neighbors of $J$ as the collection $I$ that maximizes $C_{J\to I|K,t}$.
	%where $I,J,K\subseteq V:=\{1,\cdots,n\}$ are subsets of the node set $V$. Again, if there is a reference measure $M$ with respect to which $X_{t+1}^{(I)}, X_t^{(J)}$ are conditionally independent given $X_t^{(K)}$, then by Proposition \ref{prop:chain_rule_cMI}, we have
	%\[
	%C_{J\to I|K,t} = H_{P\|M}(X_{t+1}^{(I)}| X_{t}^{(K)}, X_{t}^{(J)}) - H_{P\|M } (X_{t+1}^{(I)}| X_{t}^{(K)}),
	%\]
	%which by Proposition \ref{prop:cMI_variational_description}, we further obtain that
	%\[
	%C_{J\to I|K,t} = \mathrm{KL} \left(\left. P_{X_{t+1}^{(I)} X_t^{(K)} X_t^{(J)}}\right\| P_{X_{t+1}^{(I)}\times X_t^{(J)}| X_t^{(K)}}\right).
	%\]
\end{eg}

\section{Dynamic determinism} \label{sec:dyn_determinism}
This section analyzes the conditional mutual information $I(X;Y|Z)$ in the case where $X=T(Y,Z)$ is determined by $Y,Z$ via a measurable function $T:A_Y\times A_Z\to A_X$. This is a context particularly relevant to dynamics.

\subsection{Mutual information: zero or positive}
Before diving into the conditional mutual information among three random variables, we first consider two random variables. 
We begin with a trivial observation.
\begin{proposition}[Zero mutual information]\label{prop:zero_MI}
	Let $X,Z$ be two random variables. If $P_X=\delta_{x_0}$ for some $x_0\in A_X$, then 
	\[
	P_{XZ}=P_{X}\otimes P_Z.
	\]
	In particular, $I(X;Z)=\mathrm{KL}(P_{XZ}\|P_X\otimes P_Z)=0$.
\end{proposition}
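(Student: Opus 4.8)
The statement to prove is Proposition \ref{prop:zero_MI}: if $P_X = \delta_{x_0}$ is a point mass, then $P_{XZ} = P_X \otimes P_Z$, and consequently $I(X;Z) = 0$.

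\textbf{Plan.} The claim is essentially that a deterministic (almost surely constant) random variable is independent of everything. The approach is to verify the product-measure identity directly on a generating $\pi$-system of measurable rectangles, then invoke the definition of mutual information together with the divergence inequality (Lemma \ref{lemma:KL_div_ineq}) to conclude $I(X;Z) = 0$.

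\textbf{Key steps.} First, observe that since $P_X = \delta_{x_0}$, for any $B_X \in \Bil_{A_X}$ we have $P_X(B_X) = \One[x_0 \in B_X]$, which is either $0$ or $1$. Second, compute $P_{XZ}(B_X \times B_Z)$ for arbitrary rectangles: if $x_0 \in B_X$, then $\{X \in B_X\}$ has probability $1$, so $P_{XZ}(B_X \times B_Z) = \PP(X \in B_X, Z \in B_Z) = \PP(Z \in B_Z) = P_Z(B_Z) = P_X(B_X) P_Z(B_Z)$; if $x_0 \notin B_X$, then $\{X \in B_X\}$ is $\PP$-null, so both $P_{XZ}(B_X \times B_Z) = 0$ and $P_X(B_X)P_Z(B_Z) = 0$ agree. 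Hence $P_{XZ}$ and $P_X \otimes P_Z$ coincide on the $\pi$-system of measurable rectangles, which generates the product $\sigma$-algebra $\Bil_{A_X} \otimes \Bil_{A_Z}$; by the $\pi$-$\lambda$ theorem (uniqueness of measures agreeing on a generating $\pi$-system, both being probability measures), $P_{XZ} = P_X \otimes P_Z$ as measures on $A_X \times A_Z$. Third, plug this into the definition $I(X;Z) = \mathrm{KL}(P_{XZ} \| P_X \otimes P_Z)$; since the two arguments are equal, Lemma \ref{lemma:KL_div_ineq} gives $\mathrm{KL}(P_{XZ}\|P_X\otimes P_Z) = 0$.

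\textbf{Main obstacle.} There is essentially no obstacle here — this is the ``trivial observation'' the authors flag. The only point requiring the tiniest bit of care is the $\pi$-$\lambda$ argument: one must note that measurable rectangles form a $\pi$-system generating the product $\sigma$-algebra and that both measures are finite (probability) measures, so agreement on the $\pi$-system forces global agreement. If one prefers to avoid even this, one can instead argue at the level of the defining supremum for KL divergence: for any finite-alphabet random variable $f$ on $A_X \times A_Z$, the pushforwards $(P_{XZ})_f$ and $(P_X \otimes P_Z)_f$ agree by the same rectangle computation refined through the finite partition induced by $f$, whence every term $\mathrm{KL}((P_{XZ})_f \| (P_X \otimes P_Z)_f)$ vanishes by Lemma \ref{lemma:KL_div_ineq} and the supremum is $0$. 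Either route is routine.
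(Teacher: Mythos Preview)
Your proof is correct. The paper does not actually supply a proof of this proposition; it is stated as a ``trivial observation'' and left unproven, so your rectangle-plus-$\pi$-$\lambda$ argument followed by Lemma \ref{lemma:KL_div_ineq} is precisely the routine verification the authors omit.
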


%In this section we classify the mutual information $I(X;Z)$ between two random variables $X,Z$ in the deterministic case $X=T(Z)$ for some measurable map $T:A_Z\to A_X$.

%The following are equivalent
%\begin{enumerate}
%	\item[(i)] $X=T(Z)$ a.s.
%	\item[(ii)] $\PP(X=T(Z)|Z)=1$ a.s.
%	\item[(iii)] $\PP(X=T(Z)|Z=z)=1$ for $P_Z$-a.e. $z\in A_Z$.
%\end{enumerate}
%When one of the above holds, we say that $X$ is determined by $Z$ via $T$.
%
%\textcolor{red}{
	%	also equivalent to $\PP(X=T(z)|Z=z)=1$ for $P_Z$-a.e. $z\in A_Z$??? NO! generally this is zero! \\
	%	and to $\PP(X\in \cdot|Z=z)$ is a dirac point mass for $P_Z$-a.e. $z\in A_Z$???
	%}
%\subsection*{Zero mutual information}
Note that $P_X=\delta_{x_0}$ is equivalent to $X\equiv x_0$ a.s.; in this case, we may view $X=T(Z)$ for the constant map $T:z\mapsto x_0$. As we will see shortly, this is essentially the only way for $I(X;Z)$ to vanish.

%Now assume $P_X$ is not a dirac point mass, i.e., $X$ is non-constant. 
More generally, consider a measurable map $T:A_Z\to A_X$ and two random variables $X,Z$. The following are equivalent
\begin{enumerate}
	\item[(i)] $X=T(Z)$ a.s.
	\item[(ii)] $\PP(X=T(Z)|Z)=1$ a.s.
	\item[(iii)] $\PP(X=T(Z)|Z=z)=1$ for $P_Z$-a.e. $z\in A_Z$.
\end{enumerate}
When one of the above holds, we say that \textit{$X$ is determined by $Z$ via $T$}.

\begin{proposition}[Positive mutual information]\label{prop:X=T(Z)gen}
	Consider a random variable $X=T(Z)$, determined by another random variable $Z$ via some measurable map $T:A_Z\to A_X$. If there is some $B_X\in\Bil_{A_X}$ with $0<P_X(B_X)<1$, then the event
	\[
	S:= B_X \times T^{-1}(A_X\setminus B_X)
	\]
	has the property that
	\[
	P_{XZ}(S)=0<P_X\otimes P_Z(S);
	\]
	in particular, we have $P_{XZ}\neq P_X\otimes P_Z$ and $I(X;Z)=\mathrm{KL}(P_{XZ}\|P_X\otimes P_Z)>0$.
\end{proposition}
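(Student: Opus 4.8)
The plan is to exhibit the set $S$ as an event that the deterministic coupling $P_{XZ}$ is forced to avoid, while the independent coupling $P_X\otimes P_Z$ charges it with positive mass, and then to invoke the divergence inequality. First I would check that $S$ is a legitimate event: since $T$ is measurable and $A_X\setminus B_X\in\Bil_{A_X}$, we have $T^{-1}(A_X\setminus B_X)\in\Bil_{A_Z}$, so $S=B_X\times T^{-1}(A_X\setminus B_X)$ lies in the product $\sigma$-algebra on $A_X\times A_Z$.

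For the joint law, write $P_{XZ}(S)=\PP\big(X\in B_X,\ T(Z)\in A_X\setminus B_X\big)$. On the almost sure event $\{X=T(Z)\}$ (characterization (i) above), the two conditions $X\in B_X$ and $T(Z)\notin B_X$ cannot hold simultaneously, so this probability is $0$. For the product law, $(P_X\otimes P_Z)(S)=P_X(B_X)\cdot P_Z\big(T^{-1}(A_X\setminus B_X)\big)$. Using $X=T(Z)$ a.s. once more, $P_Z\big(T^{-1}(A_X\setminus B_X)\big)=\PP(T(Z)\in A_X\setminus B_X)=\PP(X\in A_X\setminus B_X)=1-P_X(B_X)$, so $(P_X\otimes P_Z)(S)=P_X(B_X)\,(1-P_X(B_X))$, which is strictly positive by the hypothesis $0<P_X(B_X)<1$.

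Hence $P_{XZ}(S)=0<(P_X\otimes P_Z)(S)$, so in particular $P_{XZ}\neq P_X\otimes P_Z$, and Lemma \ref{lemma:KL_div_ineq} (the divergence inequality, with its equality case) yields $I(X;Z)=\mathrm{KL}(P_{XZ}\|P_X\otimes P_Z)>0$. There is no genuine obstacle here; the only point requiring a little care is the passage between "$X=T(Z)$ a.s." and the statements used when computing the two marginals, which is exactly the equivalence (i)$\Leftrightarrow$(iii) recorded just before the proposition. The conceptual content — and the reason this is "essentially the only way $I(X;Z)$ can vanish" — is that any non-trivial measurable set $B_X$ produces a rectangle straddling the graph of $T$ in such a way that it is null for the graph-supported measure $P_{XZ}$ but not for $P_X\otimes P_Z$.
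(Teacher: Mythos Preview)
Your proof is correct and follows essentially the same approach as the paper's: you compute $P_{XZ}(S)=0$ from the constraint $X=T(Z)$ a.s.\ and $(P_X\otimes P_Z)(S)=P_X(B_X)(1-P_X(B_X))>0$ via the product structure, then invoke the divergence inequality. The only difference is cosmetic---you add an explicit measurability check for $S$ and spell out the appeal to Lemma~\ref{lemma:KL_div_ineq}, both of which are welcome but not substantive departures.
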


\begin{proof}
	\begin{align*}
		P_{XZ}(S)=& \PP((X,Z)\in S) = \PP((T(Z),Z)\in B_X \times T^{-1}(A_X\setminus B_X)) \\
		=& \PP(Z\in T^{-1}(B_X)\cap T^{-1}(A_X\setminus B_X)) =0
	\end{align*}
	and
	\begin{align*}
		P_X\otimes P_Z(S) = & P_X(B_X) P_Z(T^{-1}(A_X\setminus B_X)) = P_X(B_X) P_X(A_X\setminus B_X) >0.
	\end{align*}
	This completes the proof.
\end{proof}

%\textcolor{red}{
	%	is it true that for any $P_X$ not dirac point mass, there is some $B_X$ with $0<P_X(B_X)<1$???\\
	%	maybe counter-example: $A_X=\{a,b,c\}$, $\Bil_{A_X}= \{ \emptyset, \{a\}, \{b,c\}, A_X \}$???
	%}

Proposition \ref{prop:X=T(Z)gen} provides a partial converse to Proposition \ref{prop:zero_MI}. If we additionally require that the alphabet $(A_X,\Bil_{A_X})$ be such that every zero-one measure is a dirac delta, then it is a complete converse.

A measure $\mu$ on a measurable space $(A,\Ail)$ is said to be a \textit{zero-one measure} if $\mu(F)$ is either 0 or 1 for all $F\in\Ail$. A dirac delta is necessarily a zero-one measure, but there are zero-one measures which are not dirac deltas. The issue usually is that the $\sigma$-algebra is too coarse.
\begin{eg}[Non-measurable singletons]
	Consider the alphabet $A_X=\{a,b\}$ equipped with the trivial $\sigma$-algebra $\Bil_{A_X}=\{\emptyset,A_X\}$. The only probability measure $P_X$ on $(A_X,\Bil_{A_X})$ is a zero-one measure, but not a dirac delta because the singletons $\{a\}, \{b\}$ are not measurable.
\end{eg}

Combining Propositions \ref{prop:zero_MI} and \ref{prop:X=T(Z)gen} yields the following dichotomy result.
\begin{theorem}[Characterization of positive mutual information]
	Let $X$ be a random variable with an alphabet $(A_X,\Bil_{A_X})$, where every zero-one measure is a dirac delta. Suppose also that $X=T(Z)$ is determined by another random variable $Z$ via a measurable map $T:A_Z\to A_X$. Then, we have a dichotomy:
	\begin{enumerate}
		\item[(i)] $X$ is constant. In this case, $I(X;Z)=0$;
		\item[(ii)] $X$ is nonconstant. In this case, $I(X;Z)>0$.
	\end{enumerate} 
\end{theorem}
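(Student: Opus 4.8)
The plan is to simply assemble the two propositions just established, together with the hypothesis on the alphabet, into the stated dichotomy. First I would split into the two cases according to whether the random variable $X$ is constant.

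\emph{Case (i): $X$ is constant.} Here I would argue that $X$ constant means $X \equiv x_0$ a.s.\ for some $x_0 \in A_X$, which is the same as $P_X = \delta_{x_0}$. (This equivalence uses that singletons are measurable, which follows from the standing hypothesis that every zero-one measure on $(A_X, \Bil_{A_X})$ is a dirac delta, or more simply is part of what "$X$ is constant" means.) Then Proposition \ref{prop:zero_MI} applies directly and gives $P_{XZ} = P_X \otimes P_Z$, hence $I(X;Z) = \mathrm{KL}(P_{XZ} \| P_X \otimes P_Z) = 0$.

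\emph{Case (ii): $X$ is nonconstant.} The goal is to produce a measurable set $B_X \in \Bil_{A_X}$ with $0 < P_X(B_X) < 1$, because once we have that, Proposition \ref{prop:X=T(Z)gen} (applicable since $X = T(Z)$ by hypothesis) immediately yields $I(X;Z) > 0$. This is exactly where the alphabet hypothesis does its work: since $X$ is nonconstant, $P_X$ is not a dirac delta, so by assumption $P_X$ is not a zero-one measure, i.e.\ there exists $B_X \in \Bil_{A_X}$ with $P_X(B_X) \notin \{0,1\}$, which is precisely $0 < P_X(B_X) < 1$. Feeding this $B_X$ into Proposition \ref{prop:X=T(Z)gen} completes the case.

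The argument is essentially a bookkeeping exercise, so I do not anticipate a serious obstacle; the only delicate point is the pedantic one of pinning down what "$X$ is constant" should mean and reconciling it with "$P_X$ is a dirac delta" versus "$P_X$ is a zero-one measure" — the hypothesis on the alphabet is designed exactly so these coincide, and making that explicit (perhaps with a one-line remark referencing the preceding discussion of zero-one measures and the non-measurable-singletons example) is the main thing to get right. Everything else is a direct citation of Propositions \ref{prop:zero_MI} and \ref{prop:X=T(Z)gen}.
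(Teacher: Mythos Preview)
Your proposal is correct and follows exactly the paper's approach: the theorem is stated immediately after the sentence ``Combining Propositions \ref{prop:zero_MI} and \ref{prop:X=T(Z)gen} yields the following dichotomy result,'' and no further proof is given. Your write-up simply makes explicit the one step the paper leaves implicit, namely that the alphabet hypothesis converts ``$X$ nonconstant'' into the existence of $B_X$ with $0<P_X(B_X)<1$ needed to invoke Proposition \ref{prop:X=T(Z)gen}.
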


Discrete spaces and Polish spaces are key examples where every zero-one measure is a dirac delta.

\begin{eg}[Separable metric space]
	If $A$ is a separable metric space, equipped with the Borel $\sigma$-algebra $\Ail$, then any zero-one measure on $(A,\Ail)$ must be a dirac delta. Indeed, if $\mu$ were a zero-one measure on $(A,\Ail)$ but not a dirac delta, then the support of $\mu$ is well-defined (see \cite[Theorem 2.1]{Parthasarathy1967}) and must contain at least two distinct points $x_1\neq x_2$ with $d(x_1,x_2)=d>0$. By definition of support, the two open balls $B_i:=B(x_i,d/3)$ are disjoint with $\mu(B(x_i,d/3))=1>0$. Now we arrive at $1=\mu(A)\geq \mu(B_1)+ \mu(B_2)= 1+1=2$, a contradiction.
	
	Specific examples include a finite or countable set $A$ equipped with the discrete distance $d(a,b)=\delta_{ab}$, and other Polish spaces equipped with the Borel $\sigma$-algebra.
\end{eg}

\subsection{Mutual information: finite or infinite}

\begin{theorem}[Mutual information: finite or infinite] \label{thm:MI_finite_infinite}
	Consider a random variable $X=T(Z)$ determined by another random variable $Z$ via some measurable map $T:A_Z\to A_X$. Assume that the singletons are measurable, i.e., $\{x\}\in\Bil_{A_X}$ for all $x\in A_X$. Then, we have a dichotomy:
	\begin{enumerate}
		\item Atomic case: there is a finite or countable set $S_X\in \Bil_{A_X}$ with $P_X(S_X)=1$. In this case, $P_{XZ} \ll P_X\otimes P_Z$ and
		\[
		I(X;Z) = \sum_{x\in S_X} P_X(x) \int_{A_Z} \ln \frac{\mathrm{d}(P_{XZ})_x}{\mathrm{d}P_Z}(z) \mathrm{d}(P_{XZ})_x(z),
		\]
		which can be either finite or infinite.
		\item Continuous case: there is $B_X\in \Bil_{A_X}$ with $P_X(B_X)>0$ and $P_X(\{x\})=0$ for all $x\in B_X$. In this case, the set
		\[
		S:= \{(T(z),z):T(z)\in B_X\}
		\]
		has the property that $P_{XZ}(S)>0$ and $P_X\otimes P_Z(S)=0$. In particular, $P_{XZ}$ is not absolutely continuous with respect to $P_X\otimes P_Z$ and hence 
		\[
		I(X;Z)=\mathrm{KL}(P_{XZ}\|P_X\otimes P_Z)=\infty.
		\]
	\end{enumerate}
\end{theorem}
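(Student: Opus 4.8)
The plan is to treat the two cases of the dichotomy exactly as the statement partitions them, and in each case reduce to the general facts about Kullback--Leibler divergence already established (Lemma \ref{lemma:relative_entropy_density} and Lemma \ref{lemma:KL_div_ineq}), together with the disintegration of $P_{XZ}$ over $P_Z$ (which exists because the $Z$-marginal of $P_{XZ}$ is $P_Z$, and the alphabets are standard). Throughout I would write $(P_{XZ})_z$ for the disintegration over $P_Z$ and recall that, since $X=T(Z)$ a.s., for $P_Z$-a.e.\ $z$ the measure $(P_{XZ})_z$ is the Dirac mass $\delta_{T(z)}$ on $A_X$ — this is just the equivalence (i)$\Leftrightarrow$(iii) in the ``$X$ is determined by $Z$ via $T$'' discussion. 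The first thing to observe is that the two cases are genuinely exhaustive: given that singletons are measurable, either the atomic part of $P_X$ carries full mass (atomic case, with $S_X$ the countable set of atoms) or else the continuous part is nonzero, and by inner regularity / the structure of measures on standard spaces one extracts a set $B_X$ of positive mass consisting of non-atoms.

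For the continuous case I would argue exactly as in Proposition \ref{prop:X=T(Z)gen}: take $S:=\{(T(z),z):T(z)\in B_X\}$, which is measurable as the preimage of $B_X$ under $(x,z)\mapsto x$ intersected with the graph of $T$ (the graph is measurable since $A_X$ is standard, hence $T$ is a measurable map into a space with measurable diagonal). Then $P_{XZ}(S)=\PP(X\in B_X)=P_X(B_X)>0$ because $X=T(Z)$ a.s., while $P_X\otimes P_Z(S)=\int_{A_Z}\mathbbm{1}_{T(z)\in B_X}\,P_X(\{T(z)\})\,\mathrm dP_Z(z)=0$ since every point of $B_X$ has $P_X$-mass zero. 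Hence $P_{XZ}$ fails to be absolutely continuous with respect to $P_X\otimes P_Z$, and Lemma \ref{lemma:relative_entropy_density} gives $I(X;Z)=\mathrm{KL}(P_{XZ}\|P_X\otimes P_Z)=\infty$.

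For the atomic case, absolute continuity $P_{XZ}\ll P_X\otimes P_Z$ should be checked first: if $(P_X\otimes P_Z)(E)=0$ then $\sum_{x\in S_X}P_X(\{x\})\,P_Z(E_x)=0$ where $E_x=\{z:(x,z)\in E\}$, so $P_Z(E_x)=0$ for every atom $x$; but $P_{XZ}(E)=\sum_{x\in S_X}P_X(\{x\})\,(P_{XZ})_x(E_x)$ and $(P_{XZ})_x$ is supported on $\{z:T(z)=x\}$, on which... here one uses that $(P_{XZ})_x\ll P_Z$ (the disintegration of $P_{XZ}$ over its $X$-marginal, legitimate because $S_X$ is countable so this is an honest conditional measure), giving $(P_{XZ})_x(E_x)=0$. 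Then the Radon--Nikodym derivative is $\frac{\mathrm dP_{XZ}}{\mathrm d(P_X\otimes P_Z)}(x,z)=\frac{1}{P_X(\{x\})}\frac{\mathrm d(P_{XZ})_x}{\mathrm dP_Z}(z)$ on the atoms, and plugging into the integral formula of Lemma \ref{lemma:relative_entropy_density} and splitting the integral over the countably many atoms yields the displayed series; the $\ln P_X(\{x\})$ terms reassemble into $-H(X)\le$ finite-or-infinite-but-bounded-below contributions, and one simply notes the whole expression can be finite (e.g.\ $T=\mathrm{id}$ with finitely supported $X$) or infinite (e.g.\ $P_X(\{x_n\})\propto 1/(n^2\log^2 n)$), so no definite claim beyond ``finite or infinite'' is made.

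The main obstacle I expect is the bookkeeping around the two layers of disintegration in the atomic case — disintegrating $P_{XZ}$ over $P_Z$ (giving the Diracs $\delta_{T(z)}$) versus over $P_X$ (giving the $(P_{XZ})_x$ appearing in the formula) — and making sure the identity $\frac{\mathrm d(P_{XZ})_x}{\mathrm dP_Z}$ is well-posed, i.e.\ that $(P_{XZ})_x\ll P_Z$; this is where the measurability of singletons and the standardness of the alphabets are really used, and it should be cited to Appendix \ref{sec:RCP_standard_space}. Everything else is a direct application of Lemmas \ref{lemma:KL_div_ineq} and \ref{lemma:relative_entropy_density}.
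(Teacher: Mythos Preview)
Your proposal is correct and follows essentially the same route as the paper. The continuous case is identical (same set $S$, same Fubini computation), and the dichotomy argument matches. For the atomic case the paper is marginally more direct: rather than invoking the disintegration $(P_{XZ})_x$ and the absolute continuity $(P_{XZ})_x\ll P_Z$, it simply bounds $\PP(T(Z)=x,\,Z\in N_x)\le \PP(Z\in N_x)=P_Z(N_x)=0$, which is the same fact unpacked; your sketch goes further than the paper in actually deriving the displayed series formula for $I(X;Z)$, which the paper's proof leaves implicit.
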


%\begin{prop} \label{prop:discrete_MI}
%	Under the assumptions of Proposition \ref{prop:X=T(Z)gen}, if there is a countable or finite set $S_X\in \Bil_{A_X}$ with $P_X(S_X)=1$, then 
%	\[
%	P_{XZ}\ll P_X\otimes P_Z.
%	\]
%\end{prop}

\begin{proof}
	For the atomic case, consider any $N\in \Bil_{A_X}\otimes\Bil_{A_Z}$ with $P_X\otimes P_Z(N)=0$. We show $P_{XZ}(N)=0$. By Fubini, for any $x\in S_X$, we have $P_Z(N_x)=0$, where $N_x:= \{z\in A_Z: (x,z)\in N\}$. Hence,
	\begin{align*}
		P_{XZ}(N) =& \PP((X,Z)\in N) =  \sum_{x\in S_X} \PP((X,Z)\in \{x\} \times N_x) = \sum_{x\in S_X} \PP(T(Z)=x, Z\in N_x) \\
		\leq& \sum_{x\in S_X} \PP(Z\in N_x) = \sum_{x\in S_X} P_Z(N_x)=0.
	\end{align*}
	%	as required.
	
	%\subsubsection*{Continuous case}
	%\begin{prop}\label{prop:continuous_MI}
	%	Under the assumptions of Proposition \ref{prop:X=T(Z)gen}, if there is some $B_X\in\Bil_{A_X}$ with $P_X(B_X)>0$ and $P_X(\{x\})=0$ for all $x\in B_X$, then the set
	%	\[
	%	S:= \{(T(z),z):T(z)\in B_X\}
	%	\]
	%	has the property that $P_{XZ}(S)>0$ and $P_X\otimes P_Z(S)=0$. In particular, $P_{XZ}$ is not absolutely continuous with respect to $P_X\otimes P_Z$ and hence $I(X;Z)=\mathrm{KL}(P_{XZ}\|P_X\otimes P_Z)=\infty$.
	%	%	 $A_X\subseteq \R^m$, $A_Z\subseteq \R^n$, and $T$ is a (non-constant) Lipschitz map, then 
	%	%	\[
	%	%	P_{XZ}\perp P_X\otimes P_Z.
	%	%	\]
	%\end{prop}
	
	%\textcolor{red}{assuming all singletons are measurable.}
	
	%Here, continuity of $T$ alone is not sufficient; e.g., consider space filling curves???
	
	%\begin{proof}
	Now we show that the atomic and continuous cases form a dichotomy. If $P_X$ does not admit a $B_X$ with $P_X(B_X)>0$ and $P_X(\{x\})=0$ for all $x\in B_X$, then for every $B_X$ with $P_X(B_X)>0$, there is some $x\in B_X$ with $P_X(\{x\})>0$. Since $P_X$ is a probability measure, there can be at most countably many $x\in A_X$ with $P_X(\{x\})>0$; denote by $A^1_X$ the set of all such point atoms $x$ of $P_X$. Note $A_X^1$ is measurable because each singleton is measurable. Then by construction we must have $P_X(A_X\setminus A_X^1)=0$ because otherwise $A_X\setminus A_X^1$ would have positive measure and hence contain a point from $A_X^1$. This shows that $P_X$ has at most countable support, namely, $A_X^1$, so we are in the atomic case 1. We conclude that the two cases indeed form a dichotomy.
	% So if Case 1 does not hold, then there is a desired $B_X$ as asserted.
	
	In the atomless case 2, by definition,
	\begin{align*}
		P_{XZ}(S) = \PP((X,Z)\in S) = \PP(X=T(Z)\in B_X) = P_X(B_X)>0.
	\end{align*}
	By Fubini,
	\[
	P_X\otimes P_Z(S) = \int_{A_Z} P_X(S_z) \mathrm{d}P_Z(z) = \int_{T^{-1}(B_X)} P_X(\{T(z)\}) \mathrm{d}P_Z(z)=0,
	\]
	where in the last equality we use $T(z)\in B_X$ for any $z\in T^{-1}(B_X)$.
\end{proof}

%\begin{rem} \label{rem:atomless_continuum}
%Note that if $P_X$ does not admit a $B_X$ with $P_X(B_X)>0$ and $P_X(\{x\})=0$ for all $x\in B_X$, then for every $B_X$ with $P_X(B_X)>0$, there is some $x\in B_X$ with $P_X(\{x\})>0$. Since $P_X$ is a probability measure, there can be at most countably many $x\in A_X$ with $P_X(\{x\})>0$; denote by $A^1_X$ the set of all such point atoms $x$ of $P_X$. Note $A_X^1$ is measurable because each singleton is measurable. Then by construction we must have $P_X(A_X\setminus A_X^1)=0$ because otherwise $A_X\setminus A_X^1$ would have positive measure and hence contain a point from $A_X^1$. This shows that $P_X$ has at most countable support, namely, $A_X^1$.
%\end{rem}

\subsection{Infinite conditional mutual information} \label{sec:infinite_cMI}
In this section, we consider the case when $Y,Z$ together determine $X$, that is, $X=T(Y,Z)$ for a measurable map $T:A_Y\times A_Z\to A_X$. 
%The goal is to classify $I(X;Y|Z)$ in this case.
We split the alphabet $A_Z$ into three disjoint pieces
\[
A_Z= A_Z^0 \cup A_Z^{\mathrm{atomic}} \cup A_Z^{\mathrm{continuous}},
\]
where $A_Z^0$ consists of  $z\in A_Z$ for which the marginal distribution $P_{X_z}:=\PP(X\in \cdot|Z=z)$ of $X_z$ concentrates on a singleton, i.e., $\PP(X=x_z|Z=z)=1$ for some $x_z\in A_X$;
$A_Z^{\mathrm{atomic}}$ consists of $z\in A_Z$ for which $P_{X_z}$ concentrates on a non-singleton at most countable set, i.e., $P_{X_z}(B_X)=\PP(X\in B_X|Z=z)=1$ for some non-singleron at most countable $B_X\in\Bil_{A_X}$;
$A_Z^{\mathrm{continuous}}$ consists of $z\in A_Z$ for which $P_{X_z}$ charges an atomless continuum, i.e., there is $B_X\in \Bil_{A_X}$ with $P_{X_z}(B_X)>0$ and $P_{X_z}(\{x\})=0$ for all $x\in B_X$
%\begin{align*}
%A_Z^0 :=& \left\{ z\in A_Z: \PP(X\in \cdot|Z=z)\text{ has a singleton support} \right\};\\
%A_Z^{\mathrm{atomic}}:=& \left\{z\in A_Z: \PP(X\in\cdot|Z=z)\text{ has a non-singleton at most countable support}\right\};\\
%A_Z^{\mathrm{continuous}}:=& \left\{z\in A_Z: \PP(X\in\cdot|Z=z)\text{ has a support containing an atomless continuum}\right\}.
%\end{align*}
By Theorem \ref{thm:MI_finite_infinite},
%Remarks \ref{rem:0-1measures} and \ref{rem:atomless_continuum}, 
the three parts are disjoint and indeed form a partition of $A_Z$.

\begin{theorem}[Conditional mutual information] \label{thm:cMI_0_infty}
	Let random variable $X=T(Y,Z)$ be determined by random variables $Y,Z$ via a measurable map $T:A_Y\times A_Z\to A_X$. Suppose $X,Y,Z$ all have standard alphabets. Then,
	\begin{align*}
		I(X;Y|Z) 
		%=& \int_{A_Z} I(X_z;Y_z)\mathrm{d}P_Z(z)\\
		= \begin{cases}
			\int_{A_Z^{\mathrm{atomic}}} I(X_z;Y_z)\mathrm{d}P_Z(z)& \text{if }P_Z(A_Z^{\mathrm{continuous}})=0,\\
			\infty&\text{else.}
		\end{cases}
	\end{align*}
	In particular, when $P_Z(A_Z^0)=1$, we have $I(X;Y|Z)=0$.
\end{theorem}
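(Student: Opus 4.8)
The plan is to apply Proposition~\ref{prop:average_disintegrated_MI}, which expresses $I(X;Y|Z)$ as the $P_Z$-average of $I(X_z;Y_z)$ over the $z$-conditioned variables, and then to analyze the integrand $I(X_z;Y_z)$ separately on each of the three pieces $A_Z^0$, $A_Z^{\mathrm{atomic}}$, $A_Z^{\mathrm{continuous}}$ of the partition of $A_Z$. The first observation is that on the $z$-slice $A_X\times A_Y\times\{z\}$, the conditioned variable $X_z$ is determined by $Y_z$ via the measurable map $T_z := T(\cdot,z):A_Y\to A_X$; this is where one needs to be slightly careful and invoke the equivalence of disintegration and regular conditional probability (Appendix~\ref{sec:RCP_standard_space}) together with the hypothesis $\PP(X=T(Y,Z)\mid Z=z)=1$ for $P_Z$-a.e.\ $z$, so that $X_z = T_z(Y_z)$ a.s.\ for $P_Z$-a.e.\ $z$. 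Thus Theorem~\ref{thm:MI_finite_infinite} applies verbatim to each pair $(X_z,Y_z)$ for $P_Z$-a.e.\ $z$.

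Next I would read off the value of $I(X_z;Y_z)$ on each piece from Theorem~\ref{thm:MI_finite_infinite} (and Proposition~\ref{prop:zero_MI}). For $z\in A_Z^0$, the marginal $P_{X_z}$ is a Dirac delta, so $P_{X_zY_z}=P_{X_z}\otimes P_{Y_z}$ by Proposition~\ref{prop:zero_MI} and $I(X_z;Y_z)=0$. For $z\in A_Z^{\mathrm{atomic}}$, $P_{X_z}$ is atomic (case~1 of Theorem~\ref{thm:MI_finite_infinite}), so $I(X_z;Y_z)\in[0,\infty]$ is given by the stated series, and in any case the integrand is nonnegative and contributes $\int_{A_Z^{\mathrm{atomic}}} I(X_z;Y_z)\,\mathrm{d}P_Z(z)$. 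For $z\in A_Z^{\mathrm{continuous}}$, $P_{X_z}$ charges an atomless continuum (case~2 of Theorem~\ref{thm:MI_finite_infinite}), so $I(X_z;Y_z)=\infty$.

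Combining these, if $P_Z(A_Z^{\mathrm{continuous}})=0$ then the $A_Z^0$ part contributes $0$, the $A_Z^{\mathrm{continuous}}$ part is $P_Z$-null, and only the $A_Z^{\mathrm{atomic}}$ part survives, giving the first case of the formula. If instead $P_Z(A_Z^{\mathrm{continuous}})>0$, then $I(X_z;Y_z)=\infty$ on a set of positive $P_Z$-measure; here I would invoke the second alternative of Proposition~\ref{prop:average_disintegrated_MI}, namely that $(P_{XYZ})_z \not\ll (P_{X\times Y|Z})_z$ on a positive-measure set of $z$ forces $I(X;Y|Z)=\infty$ — concretely, on each such $z$ the witnessing set from case~2 of Theorem~\ref{thm:MI_finite_infinite} glues over $z\in A_Z^{\mathrm{continuous}}$ into a set $B_{XYZ}$ with $P_{XYZ}(B_{XYZ})>0=P_{X\times Y|Z}(B_{XYZ})$. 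The final claim, that $P_Z(A_Z^0)=1$ gives $I(X;Y|Z)=0$, is then immediate since the integrand vanishes $P_Z$-a.e.

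The main obstacle is the measurability bookkeeping: one must check that the three sets $A_Z^0$, $A_Z^{\mathrm{atomic}}$, $A_Z^{\mathrm{continuous}}$ are $P_Z$-measurable and genuinely partition $A_Z$ (up to a $P_Z$-null set), that $z\mapsto I(X_z;Y_z)$ is measurable so the integral makes sense, and that $X_z=T_z(Y_z)$ holds for $P_Z$-a.e.\ $z$ rather than merely in some averaged sense — all of which rely on the standard-alphabet hypothesis and the disintegration machinery assembled in the Appendix. Once that is in place, the argument is just a case split feeding Theorem~\ref{thm:MI_finite_infinite} into Proposition~\ref{prop:average_disintegrated_MI}.
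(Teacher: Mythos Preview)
Your proposal is correct and follows essentially the same approach as the paper: split $I(X;Y|Z)$ via Proposition~\ref{prop:average_disintegrated_MI} into integrals over $A_Z^0$, $A_Z^{\mathrm{atomic}}$, $A_Z^{\mathrm{continuous}}$, then apply Proposition~\ref{prop:zero_MI} and Theorem~\ref{thm:MI_finite_infinite} to each piece. If anything, you are more explicit than the paper about the step $X_z=T_z(Y_z)$ a.s.\ and about the measurability bookkeeping, which the paper's short proof leaves implicit.
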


\begin{proof}
	By Proposition \ref{prop:average_disintegrated_MI}, we split the conditional mutual information into three parts.
	\[
	I(X;Y|Z) = \int_{A_Z^0} I(X_z;Y_z)\mathrm{d}P_Z(z) + \int_{A_Z^{\mathrm{atomic}}} I(X_z;Y_z)\mathrm{d}P_Z(z) + \int_{A_Z^{\mathrm{continuous}}} I(X_z;Y_z)\mathrm{d}P_Z(z)
	\]
	By Proposition \ref{prop:zero_MI}, $I(X_z;Y_z)=0$ for any $z\in A_Z^0$, so the first term vanishes. The last term is zero when $P_Z(A_Z^{\mathrm{continuous}})=0$ and is $\infty$ otherwise, according to Theorem \ref{thm:MI_finite_infinite}. 
\end{proof}

In many dynamically relevant situations, we have $P_Z(A_Z^{\mathrm{continuous}})>0$, as announced in Theorem A, and hence $I(X;Y|Z)=\infty$.

\section{Examples}\label{sec:examples}
\subsection{Bernoulli interval maps}\label{sec:Bernoulli_maps}
Consider the piecewise linear expanding map $E_d:[0,1]\to[0,1]$, $d\in\Z$, $d\geq 2$, on the unit interval given by $E_d(x)=d\cdot x\mod 1$. 

If $Y$ is uniformly distributed on the interval, i.e., $P_Y=\mathrm{Leb}_{[0,1]}$, then so is $X=E_d(Y)$, i.e., $P_X=P_Y$. The joint distribution of $(X,Y)$ on the unit square $[0,1]^2$ is given by $P_{XY}=(E_d,\mathrm{id})_*\mathrm{Leb}_{[0,1]}$, which is supported on the graph of $E_d$. In particular, $P_{XY}$ is mutual singular with respect to $P_X\otimes P_Y=\mathrm{Leb}_{[0,1]^2}$. By Theorem \ref{thm:MI_finite_infinite}, $I(X;Y)=\mathrm{KL}(P_{XY}\| P_X\otimes P_Y)=\infty$. 

Now we discretize. Fix a positive integer $L=\Delta^{-1}\in \Z_{>0}$. Then, the uniform partition by $\{\left[\frac{i-1}{L}, \frac{i}{L} \right)\}$
% Consider the uniform partition
% \[
% \Qil=\Qil^{(L)}:=\{Q_i:i=1,\cdots,L\},~~~~Q_i=Q_i^{(L)}:=\left[\frac{i-1}{L}, \frac{i}{L} \right).
% \]
% Note that $\Qil$ 
is a Markov partition for $E_d$. 
% Define the discretization projection
% \[
% %\Pi^{\Delta}:[0,1]^2 \to \{1,\cdots,L\}^2,~~~~\Pi^{\Delta}(x,z)=
% %\left(\sum_{i=1}^L \One_{Q_i}(x), \sum_{j=1}^L \One_{Q_j}(z)\right) = 
% %(i_x,j_z),
% \Pi^{\Delta}:[0,1] \to \{1,\cdots,L\},~~~~\Pi^{\Delta}(x)=i_x,
% \]
% where $i_x$ is the index $i$ for which $x\in Q_i$.
% Denote by $X^{\Delta}=\Pi^{\Delta}X$ the $\Qil$-discretized version of $X$. 
Note
\[
\PP(X^{\Delta}=i\Delta)=  \mathrm{Leb}_{[0,1]}\left[\frac{i-1}{L}, \frac{i}{L} \right) = \frac{1}{L},~~~~\forall i=1,\cdots,L.
\]
This shows that $X^{\Delta}$ is uniformly distributed on $\{1,\cdots,L\}$. So is $Y^{\Delta}= \Pi^{\Delta}Y$.
The joint distribution $P_{X^{\Delta}Y^{\Delta}}$ of $(X^{\Delta},Y^{\Delta})$ charges uniform mass 
%of $\frac{1}{dL}$ 
to the pairs
\begin{equation} \label{eq:Bernoulli_map_ij}
	%(X^{\Delta},Y^{\Delta}) = 
	(d(i-1)+r\mod L,i),~~~~i=1,\cdots,L,~~r=1,\cdots,d.
\end{equation}
When $L\leq d$, then $P_{X^{\Delta}Y^{\Delta}}$ is uniform on $\{1,\cdots,L\}^2$, with $P_{X^{\Delta}Y^{\Delta}}(i,j)=\frac{1}{L^2}$ for each $(j,i)\in\{1,\cdots,L\}^2$. In this case,
\[
I(X^{\Delta};Y^{\Delta}) = \mathrm{KL}(P_{X^{\Delta}Y^{\Delta}} \| P_{X^{\Delta}}\otimes P_{Y^{\Delta}})=0. 
\]

When $L>d$, then only $dL$ pairs of $(j,i)\in \{1,\cdots,L\}^2$ satisfying eq. (\ref{eq:Bernoulli_map_ij}) are charged with mass $\frac{1}{dL}$ each. In this case, 
\begin{align*}
	I(X^{\Delta};Y^{\Delta}) =& \mathrm{KL}(P_{X^{\Delta}Y^{\Delta}}\| P_{X^{\Delta}}\otimes P_{Y^{\Delta}}) \\
    =& \sum_{(j,i)} P_{X^{\Delta}Y^{\Delta}}(j,i) \ln \frac{P_{X^{\Delta}Y^{\Delta}}(j,i)}{P_{X^{\Delta}}\otimes P_{Y^{\Delta}}(j,i)} \\
    =& dL \frac{1}{dL} \ln \frac{1/dL}{1/L^2} = \ln L-\ln d.
\end{align*}

In the discretized version, a more expanding map $E_d$ with large $d$ gives less mutual information.

\subsection{Sine box functions} \label{sec:sine_box}
Consider the sine box function $S_n:[0,1]\to[0,1]$ given by
\[
S_n(x):=\frac{1+\sin2\pi nx}{2},~~~~n=1,2,\cdots.
\]
We compute its invariant measure $\mu_n$ by taking a long trajectory $\{x_t=S_n^t(x_0):t=\tau_0,\tau_0+1,\cdots,\tau_0+\tau-1\}$, starting from $x_0=0.5$ (other initial points $0.2,0.3,\cdots,0.9$ yielded very similar results), discarding the first $\tau_0=1000$ iterates as transient, and collecting the next $\tau=10^6$ iterates to approximate
\[
\mu_n \approx \mu_n^{(\tau)}:=\frac{1}{\tau}\sum_{t=\tau_0}^{\tau_0+\tau-1} \delta_{x_t}.
\]
If $Y$ follows $\mu_n$, then $X=S_n(Y)$ follows $(S_n)_*\mu_n=\mu_n$.

% Consider the grid partion $\Qil=\{Q_i:i=1,\cdots,L\}$ of $[0,1]$ with $L=100$, $Q_i=[(i-1)/L,i/L)$. The discretization by projecting through $\Pi^{\Delta}$ yields $P_{X^{\Delta}}=P_{Z^{\Delta}}=\Pi^{\Delta}\mu_n$, whose 
The probability density function $\phi$ of $\mu_n$ is approximated by the histogram for $\{x_t\}$ binned into $\{\left[(i-1)\Delta, i\Delta\right)\}$, that is,
\[
\phi^{(\tau)}((i-1)\Delta) := \frac{1}{\tau} \sum_{t=\tau_0}^{\tau_0+\tau-1} \One_{\left[(i-1)\Delta, i\Delta\right)}(x_t),~~~~i=1,\cdots,L,
\]
which can be represented in vector form
\[
\phi^{(\tau)} = (\phi^{(\tau)} _i)_{i=1}^L,~~~~\phi^{(\tau)}_i := \phi^{(\tau)}((i-1)\Delta).
\]
The product of the marginals $P_X\otimes P_Y$ discretizes into $P_{X^{\Delta}}\otimes P_{Y^{\Delta}} = (\Pi^{\Delta}\times \Pi^{\Delta})(P_{X}\otimes P_Y)$, which is approximated by
\[
P_{X^{\Delta}}\otimes P_{Y^{\Delta}}  \approx P_{X^{\Delta}}^{(\tau)}\otimes P_{Y^{\Delta}}^{(\tau)}:=\phi^{(\tau)}\cdot (\phi^{(\tau)})^{\top}
\]

The joint distribution $P_{XY}$ discretizes into $P_{X^{\Delta}Y^{\Delta}}=(\Pi^{\Delta}, \Pi^{\Delta})(P_{XY})$, which is then approximated by
\[
P_{X^{\Delta}Y^{\Delta}} \approx  P_{X^{\Delta}Y^{\Delta}}^{(\tau)} = ( P_{X^{\Delta}Y^{\Delta}}^{(\tau)})_{i,j=1}^L,
\]
where 
\[
( P_{X^{\Delta}Y^{\Delta}}^{(\tau)})_{i,j} = \frac{1}{\tau} \sum_{t=\tau_0}^{\tau_0+\tau-1} \One_{[(i-1)\Delta,i\Delta)}(x_t)\cdot \One_{[(j-1)\Delta,j\Delta)}(x_{t+1}).
\]
It follows from Theorem \ref{thm:MI_finite_infinite} that $I(X,Y)=\infty$ for any $n$.  However, higher value of $n$ decreases the ability to resolve uncertainty about $X=S_n(Y)$ from knowledge about $Y$. Accordingly, we expect $I(X^{\Delta};Y^{\Delta})$ to decrease as $n$ increases. This is confirmed by simulations as shown in Figure \ref{fig:sine_box}.

\newpage
\appendix

\section{Regular conditional probability and disintegration on standard measurable spaces} \label{sec:RCP_standard_space}
We motivate the consideration of standard measurable spaces by an attempt to generalize the definition of conditional probability for discrete variables to more general variables. We finish the discussion by showing that regular conditional probabilities are equivalent to disintegrations in our setting.

Consider a common probability space $(\Omega,\Fil,\PP)$ for random variables $X,Y,Z$ taking values in $(A_X,\Bil_{A_X})$, $(A_Y,\Bil_{A_Y})$, $(A_Z,\Bil_{A_Z})$.

The first challenge in generalizing the definition of conditional probability $\PP(F|Z=z):=\frac{\PP(F\cap\{Z=z\})}{\PP(Z=z)}$ to non-discrete variables $Z$ is that the events $\{Z=z\}$ being conditioned on may well have zero probability.
To overcome this challenge, a first fix is to interpret the conditional probability as a density (Radon-Nikodym derivative) rather than a fraction. More precisely, given an arbitrary random variable $Z$ and a fixed event $F\in\Fil$,
% with $\PP(F)>0$, 
we define $\PP(F|Z=z)$, $z\in A_Z$ to be the Radon-Nikodym derivative
\[
\PP(F|Z=z):= \frac{\mathrm{d}\PP^F(Z\in\cdot)}{\mathrm{d}\PP(Z\in\cdot)}(z) = \frac{\mathrm{d}\PP^F(Z\in\cdot)}{\mathrm{d}P_Z}(z) ,~~~~z\in A_Z,
\]
where $\PP^F(Z\in \cdot):=  \PP(F\cap \{Z\in \cdot\})$ is absolutely continuous with respect to $P_Z=\PP(Z\in\cdot)$. By Radon-Nikodym Theorem, $\PP(F|Z=z)$ exists and is $P_Z$-essentially unique.
%; in case $\PP(F)=0$, we simply define $\PP(F|Z=z):=0$ for all $z\in A_Z$. 
Equivalently, we have the defining equation for $\PP(F|Z=z)$
\[
\PP(F\cap\{Z\in B_Z\}) = \PP^F(Z\in B_Z)=\int_{B_Z} \PP(F|Z=z)\mathrm{d}P_Z(z),~~~~B_Z\in \Bil_{A_Z},
\]
an analogue of the discrete alphabet case
\[
\PP(F\cap\{Z\in B_Z\})=\sum_{z\in B_Z} \PP(F|Z=z) P_Z(z),~~~~B_Z\in \Bil_{A_Z}.
\]

This is a more direct construction than the usual conditioning on sigma-algebra, which we review below for comparison.
For a fixed event $F\in\Fil$, the \textit{conditional probability} $\PP(F|\Gil)$ given a sigma-algebra $\Gil\subseteq\Fil$ is defined to be any $\Gil$-measurable random variable $g:\Omega\to[0,1]$ with 
\[
\int_G g\mathrm{d}\PP = \PP(F\cap G),~~~~\forall G\in\Gil.
\]
$\PP(F|\Gil)$  exists and is $\PP$-a.s. unique as the Radon-Nikodym derivative of $\PP(F\cap \cdot)/\PP(F)$ with respect to $\PP$, both restricted to $\Gil$, provided $\PP(F)>0$; in case $\PP(F)=0$, we have $\PP(F|\Gil)\equiv0$.
Now consider $\Gil=\sigma(Z)$.
Since $\PP(F|\sigma(Z))$ is $\sigma(Z)$-measurable, it can be factored through $Z$ \cite[Lemma 5.2.1]{Gray2009}, that is, 
\[
\PP(F|\sigma(Z))= h\circ Z,
\]
for some measurable function $h:A_Z\to[0,1]$. We thus have
\[
\PP(F|Z=z)= h(z).
\]

% In the case that $F=\{X\in B_X\}$ for some $B_X\in\Bil_{A_X}$, we define the \textit{conditional distribution} of $B_X$ given $Y=y$ to be
% \[
% P_{X|Y}(B_X|Y=y) := \PP(X\in B_X|Y=y) = \PP(F|Y=y)=h(y).
% \]
% Note that
% \[
% P_{XY}(B_X\times B_Y) = \int_{B_Y} P_{X|Y}(B_X|Y=y) \mathrm{d}P_Y(y),~~~~B_X\in\Bil_{A_X},~~B_Y\in\Bil_{A_Y}.
% \]

A subtle issue remains with this Radon-Nikodym construction, namely, the potential pile up of exceptional sets $E(F)$ in the definition of $\PP(F|Z=z)$. The Radon-Nikodym derivative $\PP(F|Z=z)$ is well-defined up to an exceptional set $E(F)$ with $\PP(E(F))=0$ depending on the event $F$. These exceptional sets may pile up $\PP\left(\bigcup_{F\in\Fil} E(F)\right)=1$ and in this case we cannot define $\PP(F|Z=z)$ simultaneously for all $F\in\Fil$.
An example of such a pathology can be found in \cite[Page 624]{Doob1990}; for more details see \cite[Chapter 5.1.3]{Durrett2010}.
Hence, in order to generalize the definition of $P_{X\times Y|Z}$ as in Eq. (\ref{eq:cMI_discrete_defn}), we need to rule out such pathologies. This motivates our second fix: the regular conditional probability.
\begin{defn}[Regular conditional probability (RCP); \cite{Gray2009} Chapter 5.8]
The \textit{regular conditional probability} given a sub-$\sigma$-algebra $\Gil\subseteq\Fil$ is a function $f:\Fil\times \Omega\to[0,1]$ such that
	\begin{enumerate}
		\item for each $\omega\in\Omega$, $f(\cdot,\omega)$ is a probability measure on $(\Omega,\Fil)$;
		\item for each $F\in\Fil$, $f(F,\cdot)$ is a version of $\PP(F|\Gil)$.
	\end{enumerate}
We consider sigma-algebra $\Gil=\sigma(Z)$ and events of the form $F=\{X\in B_X\}\in\Fil$.
	%$F=X^{-1}B_X\in\sigma(X)$, and $\omega\in Y^{-1}\{y\}$ so that
	%\[
	%f(F,\omega) = \PP(X\in B_X|\sigma(Y))(\omega) = \PP(X\in B_X|Y=y) = P_{X|Y}(B_X|Y=y),
	%\]
Define the \textit{regular conditional distribution} of $X$ given $Z$ to be
	\[
	%\Bil_{A_X}\times Y\to[0,1],~~~~(B_X,y) \mapsto
	\PP(X\in B_X|Z=z) := f(\{X\in B_X\},\omega),~~\omega\in Y^{-1}\{z\}.
	\]
 %    of $X$ given $Y$ to be
	% \[
	% %\Bil_{A_X}\times Y\to[0,1],~~~~(B_X,y) \mapsto
	% P_{X|Y}(B_X|Y=y) := f(X^{-1}B_X,\omega),~~\omega\in Y^{-1}\{y\}.
	% \]
\end{defn}

	% One can guarantee the above pathology does not occur, 
    RCP does not always exist in general but it does, for example, \cite[Corollary 5.8.1]{Gray2009}  (i) when both $(A_X,\Bil_{A_X})$ and $(A_Z,\Bil_{A_Z})$ are standard, (ii) when either is discrete.

\begin{defn}[Standard measurable space; \cite{Arnold1998} page 541]
	A measurable space $(\Omega,\Fil)$ is called a \textit{standard measurable space} if isomorphic via a bi-measurable bijection to a Borel subset of a Polish space.
\end{defn}

%\begin{defn}[Standard space, atom; \cite{Gray2011} pp10] \index{standard space} \index{atom}
In particular, a standard measurable space $(\Omega,\Fil)$ admits
%	A measurable space $(A,\Bil_A)$ is \textit{standard} if there exists 
a sequence of finite fields $\Fil_n\subseteq \Fil$, $n=0,1,\cdots$ such that
\begin{enumerate}
	\item increasing fields: $\Fil_{n}\subseteq \Fil_{n+1}$ for all $n=0,1,\cdots$;
	\item generating fields: $\Fil=\sigma\left(\bigcup_{n=0}^{\infty} \Fil_n\right)$;
	\item nonempty atomic intersection: an event is called an \textit{atom} of a field if it is nonempty and its only subsets which are members of the field are the empty set and itself. If $G_n\in\Fil_n$, $n=0,1,\cdots$ are atoms with $G_{n+1}\subseteq G_n$ for all $n$, then
	\[
	\bigcap_{n=0}^{\infty}G_n\neq\emptyset.
	\]
\end{enumerate}
%\end{defn}
In fact, the above three conditions are sometimes taken to be the defining properties of a standard measurable space, for example in \cite{Gray2011}. We have taken the more restricted definition of Arnold \cite{Arnold1998} to ensure that both regular conditional probabilities and disintegrations exist.

%\textcolor{red}{
	%\cite{Arnold1998} uses a different definition for standard space, which refers to the topology of the measurable space. We will use this notion for disintegrations. so maybe better to show that standard in the sense of \cite{Arnold1998} implies standard in the sense of \cite{Gray2009}.
	%}

Now we review disintegrations and show that they coincide with regular conditional probabilities in our setting.
\begin{defn}[Disintegration; \cite{Arnold1998} pp 22]
Given a probability measure $\mu$ on a product measurable space $(A\times B,\Ail\otimes \Bil)$ and a probability measure $\nu$ on $(A,\Ail)$, we say that a function $\mu_{\cdot}(\cdot):A\times \Bil\to[0,1]$ is a \textit{disintegration} of $\mu$ with respect to $\nu$ if 
\begin{enumerate}
	\item for all $B\in\Bil$, $a\mapsto\mu_a(B)$ is measurable function from $(A,\Ail)$ to $([0,1],\Bil([0,1]))$;
	\item for $\nu$-a.e. $a\in A$, $B\mapsto \mu_a(B)$ is a probability measure on $(B,\Bil)$;
	\item for all $E\in \Ail\otimes \Bil$, 
	\[
	\mu(E) = \int_A\int_B \One_E(a,b)\mathrm{d}\mu_a(b) \mathrm{d}\nu(b).
	\]
\end{enumerate}
\end{defn}

Disintegrations do not always exist, but they do exist $\nu$-essentially uniquely, when $(A,\Ail)$, $(B,\Bil)$ are both standard alphabets, see \cite[Proposition 1.4.3]{Arnold1998} and \cite[Corollary 5.8.1]{Gray2009}. 

Returning to our previous setting, $P_{XYZ},P_{X\times Y|Z}$ both have $Z$-marginals equal to $P_Z$ by construction and so both admit disintegrations with respect to $P_Z$ denoted by $(P_{XYZ})_z$ and $(P_{X\times Y|Z})_z$. In this case, it follows from the definitions of RCP and disintegration and their existence and essential uniqueness that for $P_Z$-a.e. $z\in A_Z$, and all $B_X\in\Bil_{A_X},B_Y\in\Bil_{A_Y}$, we have
\begin{align*}
	(P_{XYZ})_z(B_X\times B_Y) =& \PP(X\in B_X,Y\in B_Y|Z=z),\\
	(P_{X\times Y|Z})_z(B_X\times B_Y) =& \PP(X\in B_X|Z=z)\PP(Y\in B_Y|Z=z).
\end{align*}

\section{Additive noise}\label{sec:additive_noise}
Consider a measurable map $T_0:[0,1]\to[0,1]$ on the unit interval, which is \textit{nonsingular} with respect to the Lebesgue measure $\lambda$ on $[0,1]$ in the sense that $\lambda(T_0^{-1}N)=0$ for any $\lambda(N)=0$.
Consider random variable $Z$ with distribution $P_Z= h_Z \lambda$.
%uniform distribution $P_Z=\mathrm{Leb}_{[0,1]}$. \textcolor{red}{why Leb??} 

Let $X_0=T_0(Z)$. 
By Theorem \ref{thm:MI_finite_infinite}, we have $I(X_0;Z)=\infty$.
% \textcolor{red}{this uses the continuous distribution of $Z$. if $Z$ has discrete distribution instead, then one can obtain finite MI.}

Now perturb $T_0$ by additive noise
\[
T_{\xi}:z\mapsto T_0(z)+\xi\mod1,
% ~~~~\xi\in[-\epsilon/2,\epsilon/2],
\]
where the noise $\xi$ is independent of $Z$ and follows some distribution $P_{\xi}= h_{\xi} \lambda$.

For concreteness, we take the uniform noise of amplitude $\epsilon$ centered at 0 with density $h_{\xi}=\frac{1}{\epsilon}\One_{[-\epsilon/2,\epsilon/2]}$.
%$\mathrm{Uniform}[-\epsilon/2,\epsilon/2]$.

Consider $X$ given by the randomly transformed $Z$ via $\{T_{\xi}\}$; more precisely,
\[
\PP(X\in B|Z=z) =\int_0^1 \One_B\circ T_{\xi}(z) \mathrm{d}P_{\xi}(\xi).
\]
In other words,
\[
(P_{XZ})_z = (R_{T_0(z)})_*P_{\xi},~~~~R_{\alpha}: x\mapsto x+\alpha\mod1.
%\frac{1}{\epsilon} \mathrm{Leb}_{[T_0(z)-\epsilon/2,T_0(z)+\epsilon/2]}.
\]

If the joint distribution $P_{XZ}\ll P_X\otimes P_Z$, then
\begin{align*}
	I(X;Z) =& \int_{[0,1]^2} f\ln f\mathrm{d} P_X\otimes P_Z,
	%	 \frac{1}{\epsilon}\One_{[z-\epsilon/2,z+\epsilon/2]}(x)
	%	=& \int_0^1 \int_0^1 \frac{1}{\epsilon} \One_{[z-\epsilon/2,z+\epsilon/2]}(x) \ln \frac{1}{\epsilon} \One_{[z-\epsilon/2,z+\epsilon/2]} (x)\mathrm{d}P_X(x) \mathrm{d}P_Z(z)\\
	%	=& \int_0^1   \frac{1}{\epsilon} \ln \frac{1}{\epsilon} P_X[z-\epsilon/2,z+\epsilon/2] \mathrm{d}P_Z(z)\\
	%	=& P_X[z-\epsilon/2,z+\epsilon/2]
\end{align*}
where $f(x,z)=\frac{\mathrm{d} P_{XZ}}{\mathrm{d}P_X\otimes P_Z}(x,z) = \frac{\mathrm{d}(P_{XZ})_z}{\mathrm{d}(P_X\otimes P_Z)_z}(x) = \frac{\mathrm{d}(\frac{1}{\epsilon} \One_{[T_0(z)-\epsilon/2,T_0(z)+\epsilon/2]} )\lambda}{\mathrm{d}P_X}(x)$. 

In general, $f$ depends on $T_0$. Consider the special case of Bernoulli maps $T_0=E_d$ or roations $T_0=R_{\alpha}$, both of which preserve $\lambda$. Then, $P_X=\lambda$, $f(x,z)=\frac{1}{\epsilon} \One_{[T_0(z)-\epsilon/2,T_0(z)+\epsilon/2]}(x)$, and we have
\begin{align*}
	I(X;Z) =& \int_{[0,1]^2} \frac{1}{\epsilon} \One_{[T_0(z)-\epsilon/2, T_0(z)+\epsilon/2]}(x) \ln \frac{1}{\epsilon} \One_{[T_0(z)-\epsilon/2, T_0(z)+\epsilon/2]}(x) \mathrm{d}x\mathrm{d}z\\
	=&\int_0^1 \int_{T_0(z)-\epsilon/2}^{T_0(z)+\epsilon/2} \frac{1}{\epsilon} \ln \frac{1}{\epsilon} \mathrm{d}x \mathrm{d}z \\
	=& \epsilon\frac{1}{\epsilon} \ln\frac{1}{\epsilon} =\ln \frac{1}{\epsilon}.
\end{align*}
This indicates that the mutual information of the blurred variables does not distinguish between very ambiguous map $T_0=E_d$ and non-ambiguous map $T_0=R_{\alpha}$.

% So $X_0$ has distribution $(\hat{T}_0h_Z)\lambda$, where $\hat{T}_0:L^1(\lambda)\to L^1(\lambda)$ is the \textit{transfer operator} of $T_0$ defined to be the Radon-Nikodym derivative
% \[
% \hat{T}_0 h := \frac{\mathrm{d} (T_0)_*(h\lambda)}{\mathrm{d}\lambda}.
% \]
% %By Proposition \ref{prop:continuous_MI}, $P_{X_0Z}$ is not absolutely continuous with respect to $P_{X_0}\otimes P_Z$ and hence

% In particular, we have $(P_{XZ})_z \ll P_X$ and 
% \[
% \frac{\mathrm{d}(P_{XZ})_z}{\mathrm{d}P_X} = \frac{h_{\xi}(x+T_0(z))}{\hat{T}h_Z(x)},~~~~\hat{T}h_Z(x) = \int_0^1 \hat{T}_0 h_Z(x+\xi) \mathrm{d}P_{\xi}(\xi)???
% \]

\section{Derivation of the discretized mutual information formula} \label{sec:app_discretized_MI_formula}

Recall that the Shannon entropy of a continuous random variable $X$ is infinite, but there is a meaningful notion of differential entropy, which differs from the Shannon entropy of the discretization of $X$ by an infinite offset.
% removing a $\log\Delta$ term which diverges.

In a similar spirit, we aim to identify such an infinite offset in mutual information $I(X;Y)$ with $X=T(Y)$ so as to extract the meaningful term $A_T(Y)$, which we have termed the relative ambiguity of the system $(T,Y)$.

Observe that $P_{X^{\Delta}Y^{\Delta}} \ll P_{X^{\Delta}}\otimes P_{Y^{\Delta}}$ and hence
\begin{equation}\label{eq:I(X^Delta;Y^Delta)}
	I(X^{\Delta};Y^{\Delta}) = \sum_{i,j} \PP(X^{\Delta}=i\Delta,Y^{\Delta}=j\Delta)  \ln \frac{\PP(X^{\Delta}=i\Delta,Y^{\Delta}=j\Delta)}{\PP(X^{\Delta}=i\Delta) \PP(Y^{\Delta}=j\Delta)}.
\end{equation}
Since the densities $f_X,f_Y$ are continuous by assumption in Conjecture C, we have the usual Riemman sum approximation
\begin{align*}
	\PP(X^{\Delta}=i\Delta) \approx & f_X(i\Delta)\Delta\\
	\PP(Y^{\Delta}=j\Delta) \approx & f_Y(j\Delta)\Delta.
\end{align*}
In the linear case $T=E_d$, the mass $f_X(i\Delta)\Delta$ splits evenly into $d=|T'(i\Delta)|$ pieces. Since $T$ is piecewise $C^1$ expanding $|T'|\geq 1$ by assumption in Conjecture C, we conjecture the key approximation
\[
	\PP(X^{\Delta}=i\Delta, Y ^{\Delta}=j\Delta) \approx \frac{f_X(i\Delta)\Delta}{|T'(i\Delta)|},~~~~T(i\Delta)\approx j\Delta.
\]
When $T$ has contracting regions $|T'|<1$, this approximation fails.
This suggests a connection to the transfer operator formula for expanding maps
\[
(\hat{T}f)(y)= \sum_{x\in T^{-1}y} \frac{f(x)}{|T'(x)|},
\]
where the transfer operator $\hat{T}:L^1(\lambda)\to L^1(\lambda)$ is defined to be 
the Radon-Nikodym derivative
\[
\hat{T} f := \frac{\mathrm{d} T_*(f\lambda)}{\mathrm{d}\lambda}.
\]
Now we combine these approximations together:
\begin{align*}
	I(X^{\Delta},Y^{\Delta}) =&  \sum_{i\Delta, j\Delta} \PP(X^{\Delta}=i\Delta,Y^{\Delta}=j\Delta)  \ln \frac{\PP(X^{\Delta}=i\Delta,Y^{\Delta}=j\Delta)}{\PP(X^{\Delta}=i\Delta) \PP(Y^{\Delta}=j\Delta)}\\
	\approx& \sum_{j\Delta} \sum_{i\Delta\in T^{-1}j\Delta} \frac{f_X(i\Delta) \Delta}{|T'(i\Delta)|} \ln \frac{f_X(i\Delta)\Delta / |T'(i\Delta)|}{f_X(i\Delta)\Delta f_Y(j\Delta)\Delta}\\
	=& \sum_{j\Delta} \sum_{i\Delta\in T^{-1}j\Delta} \frac{f_X(i\Delta) \Delta}{|T'(i\Delta)|} \ln \frac{1}{ |T'(i\Delta)| f_Y(j\Delta)\Delta}\\
	\approx& \int_{A_Y} \hat{T}\left[f_X \ln \frac{1}{|T'|\cdot f_Y\circ T \cdot \Delta}\right] \mathrm{d}y\\
	=& \int_{A_X} f_X \ln \frac{1}{|T'|}\mathrm{d}x + \int_{A_X} f_X \ln \frac{1}{f_Y\circ T} \mathrm{d}x + \int_{A_X} f_X \ln\frac{1}{\Delta} \mathrm{d}x\\
	=& -\int_{A_X} f_X \ln |T'| \mathrm{d}x + \int_{A_Y} (\hat{T}f_X) \ln \frac{1}{f_Y} \mathrm{d}y + \ln \Delta^{-1} \\
	=& H(Y) - \int \ln |T'|\mathrm{d}P_X + \ln \Delta^{-1}.
\end{align*}

\newpage
\bibliographystyle{amsalpha}
\bibliography{references}

\end{document}